
\documentclass{amsart}


%
%
\usepackage{latexsym}
\usepackage{euscript}
\usepackage{amsfonts}
\usepackage{amsmath}

\usepackage{color}

\usepackage{amssymb}

\usepackage{enumerate}

 \newtheorem{thm}{Theorem}[section]
 
 \newtheorem{lem}[thm]{Lemma}
 \newtheorem{prop}[thm]{Proposition}
 \theoremstyle{definition}
 
 \theoremstyle{remark}

 \numberwithin{equation}{section}

\begin{document}
%
%
\title[Dilations of holomorphic semigroups]{Kre\u{\i}n space unitary dilations of Hilbert space holomorphic semigroups}

\author{S.A.M.~Marcantognini}
\address{Department of Mathematics,
Instituto Venezolano de Investigaciones Cient\'{\i}ficas,
Km. 11 Carretera Panamericana,
Altos de Pipe, Edo. Miranda,
Venezuela;
\newline
CONICET-Instituto Argentino de Matem\'aticas ``Alberto P. Calder\'on'', Saavedra 15, Piso 3, 1083, C.A.B.A., Argentina;
\newline
Instituto de Ciencias, Universidad Nacional de General Sarmiento, Juan Mar\'{\i}a Gutierrez, 1613, Los Polvorines, Pcia. de Buenos Aires, Argentina.}
\email{stefania.marcantognini@gmail.com}
\thanks{The work was done during my sojourn in the Instituto Argentino de Matem\'aticas in a position funded by CONICET. I am deeply grateful to the institute for the hospitality and to CONICET for the financial support.}


\subjclass{Primary: 47D03. Secondary: 47B44}

\keywords{Hilbert space holomorphic semigroups, 
Kre\u{\i}n space unitary groups,\\
sectorial operators, Naimark's Representation Theorem}

\date{ }
\dedicatory{ }

\begin{abstract}
The infinitesimal generator $A$ of a strongly continuous semigroup on a Hilbert space is assumed to satisfy that $B_\beta:=A-\beta$ is a sectorial operator of angle less than $\frac{\pi}{2}$ for some $\beta \geq 0$. If $B_\beta$ is dissipative in some equivalent scalar product then the Naimark-Arocena Representation Theorem is applied to obtain a Kre\u{\i}n space unitary dilation of the semigroup. 
\end{abstract}

\maketitle

\section{Introduction}
\label{sec:introduction}

A Kre\u{\i}n space unitary dilation of a  strongly continuous semigroup $\{T(t)\}$ on a Hilbert space $\mathfrak H$ was built up by B. McEnnis under the assumption that the numerical range of the infinitesimal generator $A$ lies on a sector of  semi-angle $0 < \theta < \frac{\pi}{2}$ around the real line \cite{McE}. McEnnis' construction follows the one given by C.~Davis for uniformly continuous semigroups \cite{Da}. Both constructions lean on the existence of a selfadjoint operator $G$ such that
$$
\frac{d}{dt}\|T(t)h\|^2 = \langle GT(t)h, T(t)h\rangle, \;  
\frac{d}{dt}\|T(t)^*h\|^2 = \langle GT(t)^*h, T(t)^*h\rangle
$$
for all $t > 0$ and all $h \in \mathfrak H$. The key element in McEnnis' is the convexity of the numerical range (the Haussdorff-Toeplitz Theorem) which grants the $m$-$\theta$-dissipativeness of $A-\beta$ for some $\beta \geq 0$ and, in consequence, a contractive holomorphic extension of $\{e^{-\beta t} T(t)\}$ within the sector $|{\arg}(z)| < \frac{\pi}{2} - \theta$. 

The sectoriality of the numerical range can be replaced by sectoriality of the spectrum together with some norm estimates of the resolvent. The latter conditions allow a functional calculus which, in turn, gives a one-to-one correspondence between the type of closed operators $A$ satisfying those constraints and the bounded holomorphic strongly continuous semigroups $\{T(z)\}$ (see, for instance, \cite{Ha}). On the other hand, if the calculus is $H^\infty$-bounded then the operator  $A$ is dissipative in some equivalent Hilbert space inner product (refer to \cite{Ha} for details). In sum, McEnnis' result can be achieved under the weaker conditions that combine sectoriality (of the spectrum) with bounded $H^\infty$-calculus. We apply the Naimark-Arocena Representation Theorem to obtain the Kre\u{\i}n space unitary dilation, though. So this note serves as a slightly more general result than the one by McEnnis as well as an alternate of its proof. 

Apart from the present section, which functions as a brief introduction, this note comprises two other sections:  Section~2 includes some preliminaries while Section~3 presents the results.   

\section{Preliminaries}\label{sec:prelims}
In the sequel we assume that all Hilbert spaces are complex and separable. 
 
Given a Hilbert space  
 $(\mathfrak H,\langle\cdot,\cdot\rangle)$, we denote by $\mathfrak L(\mathfrak H)$ the linear space of all bounded linear operators on $\mathfrak H$. When $A$ is a linear operator which is not everywhere defined on $\mathfrak H$, we write $\mathcal D(A)$ for its domain. The symbol $\mathcal R(A)$ stands for the range of $A$. The {\it{numerical range}} of $A$ is the set 
$$\nu(A) := \{ \langle Ax, x\rangle~|~x\in \mathcal D(A), \|x\| =1\}.$$
If $A: \mathcal D(A) \to \mathfrak H$ is a closed linear operator then 
 $$
 \rho(A) := \{\lambda \in \mathbb C~|~(\lambda - A)^{-1} \in \mathfrak L(\mathfrak H)\}, \quad \sigma(A) := \mathbb C \setminus \rho(A)
 $$
and
$$
 R(\lambda, A) := (\lambda -A)^{-1}\quad (\lambda \in \rho(A))
$$
are its resolvent set, spectrum and resolvent, respectively.

\subsection{Holomorphic semigroups}\label{Kato}

References on semigroup theory abound; \cite{Pa} is amongst the classical textbooks on the subject. We recall that a {\it{strongly continuous (one-parameter) semigroup}} on a Hilbert space $(\mathfrak H,\langle\cdot,\cdot\rangle)$ is a family $\{T(t)\} \subseteq \mathfrak L(\mathfrak H)$ parameterized by $t\geq 0$ that satisfies the following conditions:
 \begin{itemize}
 \item[(i)] $T(0) = 1$ and $T(s+t)= T(s)T(t)$ for all $s,t \geq 0$. 
 \item[(ii)] $T(t)$ converges strongly to $1$ as $t\to 0^+$.
 \end{itemize}
 
 When $\|T(t)\| \leq 1$ for all $t\geq 0$, $\{T(t)\}$ is said to be a {\it{contraction semigroup}}. 
  
 The {\it{infinitesimal generator}} $A$ of a strongly continuous semigroup $\{T(t)\}$ is defined as 
 $Ax := \lim_{t\to0^+} t^{-1}(T(t)-1)x$ being the domain $\mathcal D(A)$ of $A$ the set of those $x \in \mathfrak H$ for which the limit exists. $A$ is known to be a densely defined closed operator.

We also recall that a linear operator $A$ on a Hilbert space $\mathfrak H$ with domain $\mathcal D(A)$ is said to be {\it{dissipative}} if ${\mbox{Re}}\langle Ax,x\rangle \leq 0$ for all $x\in \mathcal D(A)$, that is, if $\nu(A)$ is contained in the closed left half-plane. A dissipative operator $A$ is called $m$-{\it{dissipative}}  if  $A$ is closed and $\mathcal R(1-A) =  \mathfrak H$.

\begin{prop}\label{p1} 
The following assertions are equivalent: 
\begin{itemize}
\item[(a)] $A$ is $m$-dissipative. 
\item[(b)] $A$ is a densely defined closed operator and $A^*$ is $m$-dissipative. 
\item[(c)] $A$ is a densely defined closed operator, $\sigma(A) \subseteq \{\lambda \in \mathbb C : {\mbox{\textnormal{Re}}}\,\lambda \leq 0\}$ and $\|R(\lambda,A)\| \leq ( {\mbox{\textnormal{Re}}}\,\lambda)^{-1}$ for all ${\mbox{\textnormal{Re}}}\,\lambda >0$.
\item[(d)] $A$ generates a strongly continuous contraction semigroup.
\end{itemize}
\end{prop}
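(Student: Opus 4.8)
The plan is to treat the resolvent condition (c) as the hub, proving the cycle (a)$\,\Rightarrow\,$(c)$\,\Rightarrow\,$(a), then (c)$\,\Leftrightarrow\,$(d), and finally (a)$\,\Leftrightarrow\,$(b). Two preliminary observations carry most of the elementary work. First, if $A$ is dissipative and $\mathrm{Re}\,\lambda > 0$, then Cauchy--Schwarz applied to
\[
(\mathrm{Re}\,\lambda)\,\|x\|^2 \leq \mathrm{Re}\langle(\lambda - A)x, x\rangle \leq \|(\lambda-A)x\|\,\|x\|
\]
yields the lower bound $\|(\lambda - A)x\| \geq (\mathrm{Re}\,\lambda)\|x\|$ for all $x \in \mathcal D(A)$; in particular $\lambda - A$ is injective with closed range. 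Second, an $m$-dissipative $A$ is automatically densely defined: if $y \perp \mathcal D(A)$, choose $x \in \mathcal D(A)$ with $(1-A)x = y$ (using $\mathcal R(1-A) = \mathfrak H$); then $0 = \langle y, x\rangle = \|x\|^2 - \langle Ax, x\rangle$ forces $\mathrm{Re}\langle Ax,x\rangle = \|x\|^2 \leq 0$, whence $x=0$ and $y=0$.

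For (a)$\,\Rightarrow\,$(c), set $\Lambda := \{\lambda : \mathrm{Re}\,\lambda > 0\} \cap \rho(A)$ and show it is the whole open right half-plane. First $1 \in \Lambda$, since the lower bound makes $1-A$ injective with closed range and $\mathcal R(1-A) = \mathfrak H$ supplies surjectivity. For any $\lambda \in \Lambda$ the lower bound gives $\|R(\lambda, A)\| \leq (\mathrm{Re}\,\lambda)^{-1}$, so the Neumann series converges on the disk $|\mu - \lambda| < \mathrm{Re}\,\lambda$, which stays in the right half-plane; hence $\Lambda$ is open, and as this disk radius is bounded below near any interior point, $\Lambda$ is also relatively closed there. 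Connectedness of the right half-plane then forces $\Lambda$ to exhaust it, and the estimate in (c) is just the lower bound rewritten. The converse (c)$\,\Rightarrow\,$(a) is immediate: $1 \in \rho(A)$ gives $\mathcal R(1-A) = \mathfrak H$, $A$ is closed by hypothesis, and dissipativity follows by taking $\lambda > 0$ real, where $\|(\lambda - A)x\|^2 \geq \lambda^2\|x\|^2$ expands to $-2\lambda\,\mathrm{Re}\langle Ax,x\rangle + \|Ax\|^2 \geq 0$, so that $\mathrm{Re}\langle Ax,x\rangle \leq \|Ax\|^2/(2\lambda) \to 0$ as $\lambda \to +\infty$.

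The equivalence (c)$\,\Leftrightarrow\,$(d) is the Hille--Yosida theorem: restricted to real $\lambda > 0$, condition (c) is exactly the Hille--Yosida generation criterion for a contraction semigroup, while a contraction semigroup conversely forces $\mathrm{Re}\langle Ax,x\rangle = \lim_{t\to0^+} t^{-1}\big(\mathrm{Re}\langle T(t)x,x\rangle - \|x\|^2\big) \leq 0$ and $1 \in \rho(A)$. For (a)$\,\Leftrightarrow\,$(b) I would exploit the conjugation symmetry of (c). Since $A$ is densely defined and closed, $A^{**} = A$ and $A^*$ is densely defined and closed; moreover $\lambda \in \rho(A^*) \Leftrightarrow \bar\lambda \in \rho(A)$ with $R(\lambda, A^*) = R(\bar\lambda, A)^*$, so $\|R(\lambda, A^*)\| = \|R(\bar\lambda, A)\|$. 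Because the closed left half-plane and the function $\lambda \mapsto \mathrm{Re}\,\lambda$ are invariant under $\lambda \mapsto \bar\lambda$, condition (c) holds for $A$ precisely when it holds for $A^*$; combining this with (a)$\,\Leftrightarrow\,$(c) applied to both $A$ and $A^*$ gives (a)$\,\Leftrightarrow\,$(b).

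The main obstacle is the surjectivity half of (a)$\,\Rightarrow\,$(c): upgrading $\mathcal R(1-A) = \mathfrak H$ to $\mathcal R(\lambda - A) = \mathfrak H$ for \emph{every} $\lambda$ in the right half-plane. The lower bound alone delivers only injectivity and a closed range, so it is the openness-and-relative-closedness (connectedness) argument that genuinely does the work here. Everything else is either the cited Hille--Yosida theorem or routine manipulation of the dissipativity inequality and of adjoints.
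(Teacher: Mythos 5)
Your proposal is correct, but it takes a genuinely different route from the paper, which offers no proof at all: it simply remarks that (a)$\,\Leftrightarrow\,$(d) is the Lumer--Phillips Theorem and (c)$\,\Leftrightarrow\,$(d) is the Hille--Yosida Theorem, and says nothing about (b). What you have done, in effect, is reprove the Lumer--Phillips content from scratch: the lower bound $\|(\lambda-A)x\|\geq(\mathrm{Re}\,\lambda)\|x\|$, the automatic density of $\mathcal D(A)$ for an $m$-dissipative operator, and the openness-plus-relative-closedness (connectedness) argument that upgrades $\mathcal R(1-A)=\mathfrak H$ to invertibility of $\lambda-A$ on the whole right half-plane; only the generation direction (c)$\,\Rightarrow\,$(d), which genuinely requires the Yosida approximation machinery, is delegated to a citation. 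Your treatment of (b) via the conjugation symmetry $\lambda\in\rho(A)\Leftrightarrow\bar\lambda\in\rho(A^*)$, $\|R(\bar\lambda,A^*)\|=\|R(\lambda,A)\|$, is a clean and efficient way to get the adjoint equivalence without redoing any semigroup theory; this is worth noting because the paper actually uses part (b) later (in the proof of Lemma 3.2, where $B^*$ is claimed $m$-dissipative), yet never indicates how (b) follows. What the paper's approach buys is brevity appropriate to a preliminaries section resting on standard textbook results; what yours buys is a self-contained argument that isolates exactly which implication is deep (the Hille--Yosida generation theorem) and which are elementary resolvent and adjoint manipulations.
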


(a) $\Leftrightarrow$ (d)  and (c) $\Leftrightarrow$ (d) are the renowned Lumer-Phillips Theorem and Hille-Yosida Theorem, respectively. 

Let $0 < \omega \leq \frac{\pi}{2}$. An $\mathfrak L(\mathfrak H)$-valued function $T$ on the additive semigroup 
$$X(\omega):= \{z\in \mathbb C : z\ne 0,\;|{\arg}(z)| < \omega\} \cup \{0\}$$ 
is called a {\it{strongly continuous holomorphic semigroup}} if: 
\begin{itemize}
\item[(i)] $z \mapsto T(z)$ is holomorphic in $X(\omega)$.
\item[(ii)] $T(0) = 1$ and $T(z+z') = T(z)T(z')$ for all $z,z' \in X(\omega)$.
\item[(iii)] For each $0<\phi<\omega$, $T(z)$ converges strongly to $1$ as $z\to 0$ within $X(\phi)$.
\end{itemize}
The holomorphic semigroup $\{T(z)\}$ on $X(\omega)$ is said to be {\it{bounded}} when:
\begin{itemize}
\item[(iv)] $\sup\{\|T(z)\| : z \in X(\phi)\} < \infty$ for each $0<\phi<\omega$.
\end{itemize}

Above and in the following, ${\arg}(z)$ is to be set in the interval $(-\pi, \pi]$ for $z \in \mathbb C \setminus \{0\}$.
  
Let $0 < \theta < \frac{\pi}{2}$. A linear operator $A$ on $\mathfrak H$ with domain $\mathcal D(A)$ is said to be {\it{$\theta$-dissipative}} if 
$\nu(A)$ lies in the sector 
$$S_{0, \theta} := \{\lambda \in \mathbb C : \lambda\ne 0,\;|{\arg}(\lambda)| \geq \pi - \theta\} \cup \{0\}.$$ 
A $\theta$-dissipative operator $A$ is $m$-$\theta$-{\it{dissipative}} if $A$ is $m$-dissipative. 
\begin{prop}\label{p2}
The following assertions are equivalent:
\begin{itemize}
\item[(a)] $A$ is $m$-$\theta$-dissipative.
\item[(b)] $A$ generates a strongly continuous holomorphic semigroup $\{T(z)\}$ on\linebreak  $X(\frac{\pi}{2} - \theta)$ satisfying $\|T(z)\| \leq 1$ for all $z \in X(\frac{\pi}{2} - \theta)$.
\end{itemize}
\end{prop}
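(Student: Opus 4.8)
The plan is to establish the two implications separately, using in both directions the rotated operators $e^{i\psi}A$ with $|\psi| < \frac{\pi}{2}-\theta$ and the contraction semigroups they generate. For $(a)\Rightarrow(b)$ I would first observe that multiplication by a unimodular scalar rotates the numerical range, $\nu(e^{i\psi}A) = e^{i\psi}\nu(A)$. Since $\nu(A)\subseteq S_{0,\theta}$ is a sector of half-angle $\theta$ about the negative real axis, for any $\psi$ with $|\psi|\leq\frac{\pi}{2}-\theta$ the rotated set lies in $S_{0,\theta+|\psi|}$, a sector of half-angle $\theta+|\psi|<\frac{\pi}{2}$, hence in the closed left half-plane. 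Thus each $e^{i\psi}A$, which is densely defined and closed with $\mathcal D(e^{i\psi}A)=\mathcal D(A)$, is dissipative. To upgrade to $m$-dissipativeness I would show $1\in\rho(e^{i\psi}A)$: since $1-e^{i\psi}A = e^{i\psi}(e^{-i\psi}-A)$, this amounts to $e^{-i\psi}\in\rho(A)$, and because $A$ is $m$-dissipative Proposition \ref{p1} gives $\{\lambda:\mathrm{Re}\,\lambda>0\}\subseteq\rho(A)$ while $\mathrm{Re}\,e^{-i\psi}=\cos\psi>0$. Hence $\mathcal R(1-e^{i\psi}A)=\mathfrak H$ and, again by Proposition \ref{p1}, $e^{i\psi}A$ generates a strongly continuous contraction semigroup $\{T_\psi(t)\}_{t\geq 0}$.

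I would then set $T(z):=T_\psi(|z|)$ for $z=|z|e^{i\psi}\in X(\frac{\pi}{2}-\theta)$ and $T(0):=1$; the bound $\|T(z)\|\leq 1$ on the whole sector is then immediate from $\|T_\psi(t)\|\leq 1$. The substance is to verify that this ray-wise prescription is single-valued and yields a holomorphic semigroup. For this I would pass to the resolvent: from the $m$-dissipativeness of all the $e^{i\psi}A$ and Proposition \ref{p1}(c) one reads off, via $R(\lambda,A)=e^{i\psi}R(e^{i\psi}\lambda,e^{i\psi}A)$, that $\sigma(A)\subseteq S_{0,\theta}$ and that $\|R(\lambda,A)\|\leq M/|\lambda|$ for $\lambda$ outside any slightly larger sector. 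This sectorial estimate lets the Cauchy integral $T(z)=\frac{1}{2\pi i}\int_\Gamma e^{z\lambda}R(\lambda,A)\,d\lambda$ over a suitable contour $\Gamma$ define a holomorphic $\mathfrak L(\mathfrak H)$-valued function on $X(\frac{\pi}{2}-\theta)$ that obeys the semigroup law, is strongly continuous up to the origin on each subsector, and agrees with $T_\psi(t)$ along each ray.

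For $(b)\Rightarrow(a)$, restricting $\{T(z)\}$ to the nonnegative real axis gives a strongly continuous contraction semigroup with generator $A$, so Proposition \ref{p1} already yields that $A$ is $m$-dissipative; it remains only to prove $\nu(A)\subseteq S_{0,\theta}$. Fixing $\psi$ with $|\psi|<\frac{\pi}{2}-\theta$, the family $t\mapsto T(te^{i\psi})$ is a strongly continuous contraction semigroup whose generator is $e^{i\psi}A$, so by Proposition \ref{p1} this generator is $m$-dissipative and in particular dissipative, giving $\mathrm{Re}\,\langle e^{i\psi}Ax,x\rangle\leq 0$ for every unit vector $x\in\mathcal D(A)$. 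Writing $\langle Ax,x\rangle=re^{i\alpha}$, this reads $\cos(\psi+\alpha)\leq 0$; letting $\psi$ sweep $(-(\frac{\pi}{2}-\theta),\frac{\pi}{2}-\theta)$ and passing to the endpoints by continuity, the intersection of these constraints forces $|\alpha|\geq\pi-\theta$, i.e. $\langle Ax,x\rangle\in S_{0,\theta}$, which is precisely $\theta$-dissipativeness.

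The step I expect to be the main obstacle is the holomorphy in $(a)\Rightarrow(b)$: the ray-wise definition of $T(z)$ must be shown to be consistent and complex-differentiable in $z$, and this is exactly what the sectorial resolvent bound and the contour representation are designed to supply; once that estimate is in hand, the semigroup identity and the strong continuity up to the boundary angles follow from routine estimates on the integral. By contrast, the rotation of the numerical range and the trigonometric extraction of the sector in $(b)\Rightarrow(a)$ are elementary.
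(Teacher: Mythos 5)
The paper offers no proof of Proposition \ref{p2} at all: it is quoted as a known classical result (in substance Kato's theorem relating $m$-sectorial operators, in his terminology, to contractive holomorphic semigroups, cf.\ \cite{Ka}), so there is no in-paper argument to compare yours against; judged on its own, your proposal is the standard textbook proof and is essentially correct. In (a)$\Rightarrow$(b) the rotation trick is sound: $e^{i\psi}A$ is dissipative for $|\psi|\leq\frac{\pi}{2}-\theta$ (one quibble: at the endpoint $|\psi|=\frac{\pi}{2}-\theta$ the rotated numerical range has half-angle exactly $\frac{\pi}{2}$, not $<\frac{\pi}{2}$ as you wrote, but it still lies in the closed left half-plane, which is all you need), and $1\in\rho(e^{i\psi}A)$ does follow from $\mathrm{Re}\, e^{-i\psi}=\cos\psi>0$ together with Proposition \ref{p1}(c); you correctly locate the real work in the consistency and holomorphy of the ray-wise definition, which the uniform estimate $\|R(\lambda,A)\|\leq (|\lambda|\sin(\phi-\theta))^{-1}$ off $S_{0,\phi}$ and the contour-integral construction supply in the standard way (this deferred step is textbook material, e.g.\ \cite{Ka}, \cite{Pa}, \cite{Ha}). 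The one statement in (b)$\Rightarrow$(a) that is stronger than what you have actually established is that the generator of $t\mapsto T(te^{i\psi})$ \emph{equals} $e^{i\psi}A$: differentiating $z\mapsto T(z)x$ at $0$ along the ray, for $x\in\mathcal D(A)$ and using strong continuity on subsectors, only shows that this generator \emph{extends} $e^{i\psi}A|_{\mathcal D(A)}$, and equality of domains would need a separate argument. Fortunately the weaker statement suffices for your purposes: the generator of a contraction semigroup is dissipative by Proposition \ref{p1}, and the restriction of a dissipative operator to a smaller domain is dissipative, so $\mathrm{Re}\langle e^{i\psi}Ax,x\rangle\leq 0$ for $x\in\mathcal D(A)$ still holds, and your trigonometric sweep over $|\psi|<\frac{\pi}{2}-\theta$ then correctly forces $|\arg\langle Ax,x\rangle|\geq\pi-\theta$, i.e.\ $\theta$-dissipativeness.
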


We point out that $A$ is said to be ($\theta$ -) {\it{accretive}} if $-A$ is ($\theta$ -) dissipative. The ($m$ -) $\theta$-accretive operators were extensively studied by T.~Kato \cite{Ka}. He called them ($m$ -) {\it{sectorial operators}} but nowadays the term ``{{sectorial}}" is referred to a different type of operator. In the following we will call $A$  a {\it{sectorial operator}} if $\sigma(A) \subseteq  S_{0,\theta}$ for some $0 < \theta < \pi$ and, for each $\theta < \phi < \pi$, $\sup\{
\| \lambda R(\lambda,A)\|~:~\lambda \in \mathbb C \setminus S_{0,\phi}\} < \infty$. We will restrict ourselves to sectorial operators of angle $0< \theta < \frac{\pi}{2}$. 
A comprehensive account of sectorial operators is \cite{Ha}. Therein the definition of sectorial operator is different from the one we adopt, though. Roughly, our corresponds with $-A$ being sectorial in the above mentioned monograph.

The most famous result on the numerical range is the Toeplitz-Hausdorff Theorem  which asserts that the numerical range of any (perhaps unbounded and not densely defined) linear operator on a complex or real  (pre-)Hilbert space  is convex. The convexity of the numerical range is one of the key elements in showing the following result from \cite{Ka} (cf. \cite{McE}).

\begin{prop}\label{beta} Let  $A$ be the infinitesimal generator of a strongly continuous semigroup $\{T(t)\} \subseteq \mathfrak L(\mathfrak H)$. If $\nu(A)$ lies in the sector
$$S_{\alpha, \theta} := \{\lambda \in \mathbb C : \lambda \ne\alpha,\;|{\arg}(\lambda -\alpha)| \geq \pi - \theta\} \cup \{\alpha\}$$
of vertex $\alpha \geq 0$ and semi-angle $0 < \theta < \frac{\pi}{2}$, then there exists $\beta \geq 0$ such that 
$A- \beta$ is $m$-$\theta$-dissipative.
\end{prop}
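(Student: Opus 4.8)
The natural guess is that $\beta=\alpha$ already works, and the plan is to show that $B:=A-\alpha$ is $m$-$\theta$-dissipative (note $\beta=\alpha\geq 0$, as required). Since $A$ generates a strongly continuous semigroup it is densely defined and closed, and both properties pass to $B$, whose domain is again $\mathcal D(A)$. Every point of $S_{\alpha,\theta}$ has real part at most $\alpha$ (because $\theta<\frac{\pi}{2}$ forces $|{\arg}(\lambda-\alpha)|\geq\pi-\theta>\frac{\pi}{2}$), and $\nu(B)=\nu(A)-\alpha$; hence the hypothesis $\nu(A)\subseteq S_{\alpha,\theta}$ becomes $\nu(B)\subseteq S_{0,\theta}$, so $B$ is $\theta$-dissipative and, in particular, dissipative. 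What remains is the range condition $\mathcal R(1-B)=\mathfrak H$, and this is the crux: the numerical-range hypothesis by itself delivers only the sectorial (half-plane) constraint and says nothing a priori about surjectivity.

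To settle the range condition I would bring in the fact that $A$ \emph{generates} the semigroup. First I would record the growth estimate $\|T(t)\|\leq e^{\alpha t}$. Indeed, for $x\in\mathcal D(A)$ the orbit is differentiable and
\[
\frac{d}{dt}\|T(t)x\|^2 = 2\,\mathrm{Re}\,\langle A\,T(t)x, T(t)x\rangle \leq 2\alpha\,\|T(t)x\|^2 ,
\]
the inequality coming from $T(t)x\in\mathcal D(A)$ together with $\mathrm{Re}\,\langle Ay,y\rangle\leq\alpha\|y\|^2$ for every $y\in\mathcal D(A)$; Gronwall's inequality then gives $\|T(t)x\|\leq e^{\alpha t}\|x\|$, and this extends to all of $\mathfrak H$ by density of $\mathcal D(A)$.

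With this estimate at hand, the Laplace representation $R(\lambda,A)=\int_0^\infty e^{-\lambda t}T(t)\,dt$ converges in $\mathfrak L(\mathfrak H)$ for every $\lambda$ with $\mathrm{Re}\,\lambda>\alpha$, so that $\{\lambda:\mathrm{Re}\,\lambda>\alpha\}\subseteq\rho(A)$. In particular $1+\alpha\in\rho(A)$, whence $\mathcal R\big((1+\alpha)-A\big)=\mathfrak H$, that is, $\mathcal R(1-B)=\mathfrak H$. Collecting the pieces, $B=A-\alpha$ is a densely defined closed dissipative operator with $\nu(B)\subseteq S_{0,\theta}$ and $\mathcal R(1-B)=\mathfrak H$; by definition it is $m$-$\theta$-dissipative, and $\beta=\alpha$ finishes the proof.

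I expect the surjectivity step to be the only genuine obstacle; everything else is a shift of the numerical range. A variant that stays closer to Proposition~\ref{p1} would avoid the explicit bound $\|T(t)\|\leq e^{\alpha t}$: since $A$ is a generator, some real $\mu_0>0$ already lies in $\rho(B)$ (a right half-plane is always contained in $\rho(A)$), and then the Lumer--Phillips principle that, for a densely defined dissipative operator, surjectivity of $\mu_0-B$ at one $\mu_0>0$ propagates to all $\mu>0$ yields $\mathcal R(1-B)=\mathfrak H$ directly. Either route converts the semigroup/generator structure into the missing range condition.
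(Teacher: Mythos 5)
Your proof is correct, and it takes a genuinely different route from the one the paper relies on. The paper in fact gives no proof of Proposition \ref{beta}: it invokes the result from Kato \cite{Ka} (cf.\ McEnnis \cite{McE}), flagging the convexity of the numerical range (Toeplitz--Hausdorff) as the key ingredient. In that approach one combines the lower bound $\|(\lambda-A)x\|\geq \mathrm{dist}(\lambda,\nu(A))\,\|x\|$ with Kato's theorem that the deficiency of $\lambda-A$ is constant on each connected component of $\mathbb C\setminus\overline{\nu(A)}$; convexity controls the component structure, and the generator property enters only to guarantee that $\rho(A)$ (which contains some right half-plane) meets the relevant component, forcing the deficiency to vanish there. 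You instead use the semigroup quantitatively: the hypothesis $\nu(A)\subseteq S_{\alpha,\theta}$ gives $\mathrm{Re}\langle Ax,x\rangle\leq\alpha\|x\|^2$; your Gronwall step is sound (for $x\in\mathcal D(A)$ the map $t\mapsto T(t)x$ is continuously differentiable with $\frac{d}{dt}T(t)x=AT(t)x$), and it yields $\|T(t)\|\leq e^{\alpha t}$; the Laplace representation of the resolvent then places the half-plane $\{\mathrm{Re}\,\lambda>\alpha\}$ inside $\rho(A)$, which is exactly the missing range condition $\mathcal R(1-B)=\mathfrak H$ for $B=A-\alpha$. Your alternative finish via the Lumer--Phillips propagation lemma (surjectivity of $\mu_0-B$ for a single $\mu_0>0$ propagates to all $\mu>0$ when $B$ is densely defined and dissipative) is equally standard and correct, and it even spares you the sharpened growth bound. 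As for what each approach buys: yours is elementary, self-contained within $C_0$-semigroup theory, avoids Toeplitz--Hausdorff entirely, and pins down the explicit and natural constant $\beta=\alpha$; the cited numerical-range argument asks less of the operator (any closed operator whose resolvent set meets the appropriate component of $\mathbb C\setminus\overline{\nu(A)}$ will do) and delivers as a by-product the resolvent estimate $\|R(\lambda,A)\|\leq \mathrm{dist}(\lambda,\nu(A))^{-1}$ off the sector, the kind of bound that underlies the holomorphic-semigroup statement in Proposition \ref{p2}.
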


By combining Proposition \ref{beta} with Proposition \ref{p2} we get that, under the assumption that 
\begin{equation}\label{quasi} 
\nu(A) \subseteq S_{\alpha, \theta} \quad (\alpha \geq 0, 0 < \theta < \pi/2),
\end{equation}
there exists $\beta \geq 0$ such that  
$\{e^{-\beta z}T(z)\}$ is a contraction holomorphic semigroup on $X(\frac{\pi}{2} - \theta)$. Hence, if we consider only real $t$, we obtain that
\begin{equation}\label{bound}
\|T(t)\| \leq e^{\beta t}\quad \mbox{for all } t \geq 0
\end{equation}
and
\begin{equation}\label{ddt}
A^nT(t) = \frac{d}{dt^n}T(t) \quad \mbox{for every } t > 0 \mbox{ and } n\in \mathbb N.
\end{equation}

 Holomorphic semigroups are somehow in between the general class of strongly continuous semigroups and the particular class of uniformly continuous semigroups (for which the infinitesimal generator is bounded). The functional calculus for sectorial operators gives a one-to-one correspondence between sectorial operators $A$ with $0\leq \theta < \frac{\pi}{2}$ and bounded holomorphic strongly continuous semigroups $\{T(z)\}$ on 
 $X(\frac{\pi}{2} - \theta)$ (see \cite{Ha}). Therefore (\ref{ddt}) is granted under the weaker condition of sectoriality. On the other hand, (\ref{bound}) holds if $A - \beta$ is $m$-dissipative.

We replace (\ref{quasi}) by the following hypothesis: there exist $\beta \geq 0$ such that $A-\beta$ is sectorial of angle $0 < \theta < \frac{\pi}{2}$ and ${\mbox{{\mbox{Re}}}}\langle Ax,x\rangle_0 \leq \beta\|x\|_0^2$ for all $x\in \mathcal D(A)$, with $\langle\cdot,\cdot\rangle_0$  an equivalent scalar product on $\mathfrak H$. This happens, for instance, if $A-\beta$ is sectorial and has bounded $H^\infty$-calculus with $H^\infty$-angle less than $\frac{\pi}{2}$ (the reader is referred to \cite{Ha} for the definitions). We remark that it is not always the case that a sectorial operator $A$ on a Hilbert space, with sectorial angle less than 
$\frac{\pi}{2}$, has bounded $H^\infty$-calculus.

\subsection{Kre\u{\i}n spaces}
As familiarity with operator theory on Kre\u{\i}n spaces is presumed, only some notation is introduced. We emphasize that the common Hilbert space notation is carried over into the Kre\u{\i}n space setting.

Given a  fundamental decomposition $\mathfrak {K} = \mathfrak {K}^+ \oplus \mathfrak {K}^-$ of the  Kre\u{\i}n space $(\mathfrak {K}, \langle\cdot,\cdot\rangle)$, we write $|\mathfrak {K}|$ for $\mathfrak {K}$ viewed as  the Hilbert space relative to the fundamental decomposition. Therefore, if $J$ is the corresponding fundamental symmetry, that is, 
$Jx=x^+ - x^-$ whenever $x= x^+  + x^-$ with $x^\pm \in 
\mathfrak {K}^\pm$, then $\langle x, y\rangle_{|\mathfrak {K}|} = \langle Jx  ,y\rangle$ and $\langle x  ,y\rangle = \langle Jx, y\rangle_{|\mathfrak {K}|}$ for all $x, y \in \mathfrak {K}$.

By $\mathfrak {L}(\mathfrak {K})$ we mean the space of all everywhere defined 
continuous linear operators on the Kre\u{\i}n space $\mathfrak K$. The space 
$\mathfrak {L}(\mathfrak {K})$ has the structure of a Banach space depending on the choice of a fundamental decomposition and the associated Hilbert space 
$|\mathfrak K|$. The corresponding {{operator norm}} for $\mathfrak {L}(\mathfrak {K})$ is the norm $\|\cdot\|$ of $\mathfrak {L}(|\mathfrak {K}|)$. We point up that any two operator norms for $\mathfrak {L}(\mathfrak {K})$ are equivalent and provide its topology. If $\mathfrak K$ and $\mathfrak G$ are two Kre\u{\i}n spaces, $\mathfrak {L}(\mathfrak {K}, \mathfrak G)$ and the operator norm are defined likewise.

For each $A \in \mathfrak {L}(\mathfrak {K}, \mathfrak G)$ there is a unique $A^* \in \mathfrak {L}(\mathfrak G,\mathfrak {K})$ so that $\langle Ax, y\rangle_\mathfrak G = \langle x, A^*y\rangle_\mathfrak K$ for all $x \in \mathfrak K$ and $y \in \mathfrak G$. 

We say that (i) $P \in \mathfrak {L}(\mathfrak {K})$ is a projection if $P^2=P=P^*$; (ii)  $U \in \mathfrak {L}(\mathfrak {K})$ is a unitary operator if $U^*U = 1 = UU^*$.

The regular subspaces of $\mathfrak K$ are those that are the ranges of projections. If  $\mathfrak F$ is a regular subspace of  $\mathfrak K$ we write $P_{\mathfrak F}$ to indicate the {\it{orthogonal projection}} from  $\mathfrak K$ onto  $\mathfrak F$.

Standard references on Kre\u{\i}n spaces and operators on them are \cite{A}, \cite{AI} and \cite{B}. We also refer to \cite{DR} and \cite{DR1} as authoritative accounts of the subject.

\subsection{Naimark-Arocena Representation Theorem}
\label{Arocena}

The Naimark Theorem characterizes those operator-valued Toeplitz kernels which have Hilbert space unitary representations. The theorem has an extension to the Kre\u{\i}n space setting due to R.~Arocena \cite{Ar}.
\begin{thm}  \label{rep}
Let $\Gamma$ be a group and denote by $e$ its neutral element. Let $(\mathfrak H, \langle \cdot,\cdot\rangle)$ be a Hilbert space. Let $f : \Gamma \to \mathfrak L(\mathfrak H)$ be a Hermitian function  such that $f(e) = 1$.   Assume that there exists a kernel $k : \Gamma \times \Gamma \to \mathfrak L(\mathfrak H)$ with the following properties:
\begin{itemize}
\item[(i)] $k(e,e) = 1$.
\item[(ii)] $k$ is a majorant of $f$, in the sense that there exists $r>0$ such that
$$\left|\sum\limits_{s,t \in \Gamma} \langle f(t^{-1}s) h(s), h(t)\rangle\right| \leq r \sum\limits_{s,t \in \Gamma}
\langle k(s,t)h(s), h(t)\rangle$$
for every function $h:\Gamma \to \mathfrak H$ with finite support.
\item[(iii)] There exists $R>0$ such that, for any $h:\Gamma \to \mathfrak H$ with finite support such that 
$\sum_{s,t \in \Gamma}\langle k(s,t)h(s), h(t)\rangle >0$, there exists another function $h^\prime:\Gamma \to \mathfrak H$ with finite support satisfying $\sum_{s,t \in \Gamma}\langle k(s,t)h^\prime(s), h^\prime(t)\rangle >0$ and
$$\frac{\left|\sum\limits_{s,t \in \Gamma} \langle f(t^{-1}s) h(s), h^\prime(t)\rangle\right|}
{\left(\sum\limits_{s,t \in \Gamma}\langle k(s,t)h(s), h(t)\rangle\right)^{\frac{1}{2}}
\left(\sum\limits_{s,t \in \Gamma}\langle k(s,t)h^\prime(s), h^\prime(t)\rangle\right)^{\frac{1}{2}}} \geq R.$$
\item[(iv)] There exists a function $\rho : \Gamma \to (0,\infty)$ such that
$$\sum\limits_{s,t \in \Gamma}\langle k(\xi s,\xi t)h(s), h(t)\rangle
\leq \rho(\xi) \sum\limits_{s,t \in \Gamma}\langle k(s,t)h(s), h(t)\rangle$$
for all $h:\Gamma \to \mathfrak H$ with finite support and all $\xi \in \Gamma$.
\end{itemize}
Then there exist a Kre\u{\i}n space $(\mathfrak K, \langle \cdot,\cdot\rangle_\mathfrak K)$ containing $(\mathfrak H, \langle\cdot,\cdot\rangle)$ as regular subspace and a unitary representation $U(s)$ of $\Gamma$ in $\mathfrak K$ such that:
\begin{enumerate}
\item[1)] $f(s) = P_\mathfrak H U(s)|_\mathfrak H$ for all $s \in \Gamma$, and
\item[2)] $\mathfrak K = \bigvee \{U(s)\mathfrak H~:~s\in \Gamma\}$, that is, $\mathfrak K$ is the space generated by $U(s)\mathfrak H$, $s\in \Gamma$.
\end{enumerate}

Conversely, if there exist a Kre\u{\i}n space $(\mathfrak K, \langle \cdot,\cdot\rangle_\mathfrak K)$ containing $(\mathfrak H, \langle\cdot,\cdot\rangle)$ as regular subspace and a unitary representation $U(s)$ of $\Gamma$ in $\mathfrak K$ such that 1) and 2) hold, then there exists a kernel $k$ satisfying (i), (ii), (iii) and (iv).  
\end{thm}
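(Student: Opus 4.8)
The plan is to prove the two implications separately, with the direct (sufficiency) direction carrying essentially all of the work. Write $\mathcal F$ for the linear space of finitely supported functions $h:\Gamma\to\mathfrak H$, and on $\mathcal F$ introduce the two sesquilinear forms
$$[h,g]_f:=\sum_{s,t\in\Gamma}\langle f(t^{-1}s)h(s),g(t)\rangle,\qquad \langle h,g\rangle_k:=\sum_{s,t\in\Gamma}\langle k(s,t)h(s),g(t)\rangle.$$
Since $k$ is a positive kernel, $\langle\cdot,\cdot\rangle_k$ is positive semidefinite; I would factor out its null space and complete to obtain a Hilbert space $(\mathcal H_k,\langle\cdot,\cdot\rangle_k)$ into which $\mathcal F$ maps with dense range. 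Because $f$ is Hermitian the form $[\cdot,\cdot]_f$ is Hermitian symmetric, and the majorant estimate (ii) reads $|[h,h]_f|\leq r\langle h,h\rangle_k$, so $[\cdot,\cdot]_f$ descends to $\mathcal H_k$ and is represented there by a bounded selfadjoint operator $G$ through $[h,g]_f=\langle Gh,g\rangle_k$, with $\|G\|\leq r$.

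The main obstacle is to upgrade $(\mathcal H_k,[\cdot,\cdot]_f)$ from a merely bounded Hermitian form to a genuine Kre\u{\i}n space inner product, and this is exactly what hypothesis (iii) supplies. By Cauchy--Schwarz the supremum over $h'$ of the displayed ratio equals $\|Gh\|_k/\|h\|_k$, attained in the limit $h'\to Gh$, so (iii) is equivalent, up to the constant, to the lower bound $\|Gh\|_k\geq R\|h\|_k$ for all $h$; hence $G$ is bounded below, and being selfadjoint it is boundedly invertible with $0\notin\sigma(G)$. Writing $G=J_0|G|$ with $J_0:=\mathrm{sgn}(G)$ a fundamental symmetry and $|G|$ positive and boundedly invertible, the inner product $\langle x,y\rangle_*:=\langle|G|x,y\rangle_k$ is equivalent to $\langle\cdot,\cdot\rangle_k$, and $[x,y]_f=\langle J_0x,y\rangle_*$ exhibits $\mathfrak K:=(\mathcal H_k,[\cdot,\cdot]_f)$ as a Kre\u{\i}n space whose topology agrees with that of $\mathcal H_k$. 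The one genuinely technical point I would treat with care is that the optimal test vector $Gh$ need not lie in the image of $\mathcal F$; since $\mathcal F$ is dense in $\mathcal H_k$ I would deduce the bound $\|Gh\|_k\geq R\|h\|_k$ from (iii) on the dense set, and conversely recover (iii) with its stated constant by an approximation argument.

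With $\mathfrak K$ in hand I would embed $\mathfrak H$ by sending $\eta$ to the class of the function $\delta_e\otimes\eta$ supported at $e$; the normalisations $k(e,e)=1$ from (i) and $f(e)=1$ force $\langle[\delta_e\otimes\eta],[\delta_e\otimes\eta']\rangle_{\mathfrak K}=\langle f(e)\eta,\eta'\rangle=\langle\eta,\eta'\rangle$, so this is an isometry of $\mathfrak H$ onto a uniformly positive, hence regular, subspace of $\mathfrak K$. The group acts by left translation $(U(\xi)h)(s):=h(\xi^{-1}s)$: hypothesis (iv) gives $\langle U(\xi)h,U(\xi)h\rangle_k\leq\rho(\xi)\langle h,h\rangle_k$, so each $U(\xi)$ preserves the $k$-null space and extends to a bounded operator on $\mathfrak K$ with $U(\xi)U(\xi')=U(\xi\xi')$, while the invariance of the Toeplitz datum $f(t^{-1}s)$ under $(s,t)\mapsto(\xi s,\xi t)$ yields $[U(\xi)h,U(\xi)g]_f=[h,g]_f$; an isometric, boundedly invertible Kre\u{\i}n operator being unitary, each $U(\xi)$ is unitary on $\mathfrak K$. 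Finally, from $U(s)(\delta_e\otimes\eta)=\delta_s\otimes\eta$ and $[\delta_s\otimes\eta,\delta_e\otimes\eta']_f=\langle f(s)\eta,\eta'\rangle$ I would read off $f(s)=P_{\mathfrak H}U(s)|_{\mathfrak H}$, and since the vectors $\delta_s\otimes\eta=U(s)(\delta_e\otimes\eta)$ span the dense subspace $\mathcal F$, conclude $\mathfrak K=\bigvee\{U(s)\mathfrak H:s\in\Gamma\}$.

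For the converse I would reverse the construction. Fixing a fundamental symmetry $J$ of $\mathfrak K$ adapted to the regular subspace $\mathfrak H$, so that $J$ restricts to the identity on the uniformly positive subspace $\mathfrak H$, I map finitely supported $h$ to $x_h:=\sum_s U(s)h(s)\in\mathfrak K$ and set $k(s,t):=P_{\mathfrak H}U(t)^*JU(s)|_{\mathfrak H}$, so that $\langle h,g\rangle_k=\langle x_h,x_g\rangle_{|\mathfrak K|}$ while $[h,g]_f=\langle x_h,x_g\rangle_{\mathfrak K}$, the latter using $f(s)=P_{\mathfrak H}U(s)|_{\mathfrak H}$ together with $U(t)^*U(s)=U(t^{-1}s)$. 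Then (i) follows from $J|_{\mathfrak H}=1$, (ii) with $r=1$ from $|\langle x,x\rangle_{\mathfrak K}|\leq\|x\|_{|\mathfrak K|}^2$, and (iv) with $\rho(\xi)=\|U(\xi)\|_{\mathfrak L(|\mathfrak K|)}^2$ from $x_{\xi\cdot h}=U(\xi)x_h$; for (iii) I would use that $\mathfrak K=\bigvee U(s)\mathfrak H$ makes the vectors $x_h$ dense, so that $Jx_h$ can be approximated by some $x_{h'}$, driving the displayed ratio arbitrarily close to $1$ and thus verifying (iii) for any $R<1$.
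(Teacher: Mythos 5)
The paper offers no proof of Theorem \ref{rep} to compare against: it is stated as the Naimark--Arocena Representation Theorem and attributed to Arocena \cite{Ar}, and the paper only applies it. Judged on its own, your argument is correct, and it is the natural GNS-type construction that underlies this result (in spirit the same as Arocena's). The steps all check out: the right-hand side of (ii) must dominate an absolute value, so the $k$-form is automatically positive and Hermitian and the quotient-completion $\mathcal H_k$ exists; the diagonal estimate in (ii) extends to the full $f$-form by polarization (for Hermitian forms this indeed gives $\|G\|\leq r$, as you claim); condition (iii) together with Cauchy--Schwarz gives $\|Gh\|_k \geq R\|h\|_k$ on the dense image of $\mathcal F$, hence everywhere, so the selfadjoint Gram operator $G$ is boundedly invertible and its polar decomposition equips $\mathcal H_k$ with a Kre\u{\i}n space structure whose inner product is the $f$-form and whose topology is that of $\mathcal H_k$; (i) makes $\eta \mapsto \delta_e\otimes\eta$ a $k$-isometry onto a closed, uniformly positive, hence regular, copy of $\mathfrak H$, and the computation $[\delta_s\otimes\eta,\delta_e\otimes\eta^\prime]_f = \langle f(s)\eta,\eta^\prime\rangle$ yields property 1); (iv) makes the left translations well defined (they preserve $k$-null vectors) and bounded, and form-invariance plus bounded invertibility makes them Kre\u{\i}n unitary; minimality is just density of the image of $\mathcal F$. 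The converse is also sound: a fundamental symmetry $J$ with $J|_{\mathfrak H}=1$ exists because the regular subspace $\mathfrak H$ is uniformly positive, so a fundamental decomposition with $\mathfrak H$ inside the positive component can be chosen (the same device appears in the paper's proof of Theorem \ref{main}), and then $k(s,t) := P_{\mathfrak H}U(t)^*JU(s)|_{\mathfrak H}$ gives (i), gives (ii) with $r=1$, gives (iv) with $\rho(\xi)=\|U(\xi)\|^2_{\mathfrak L(|\mathfrak K|)}$, and gives (iii) for any $R<1$ by density of the vectors $x_{h^\prime}$. The only loose assertion is the claimed equivalence of (iii) with the lower bound on $G$ (the optimizing vector $Gh$ need not come from $\mathcal F$), but you only use, and correctly justify, the implication that is actually needed.
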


\section{The Dilation}
\label{sec:Dilation}

Hereafter $\{T(t)\}$ is a strongly continuous semigroup of bounded linear operators on a Hilbert space $(\mathfrak H, \langle\cdot,\cdot\rangle)$ with infinitesimal generator $A$. 

Let $\langle \cdot,\cdot \rangle_A$ be the graph scalar product on $\mathcal D(A)$:
$$\langle x, y \rangle_A := \langle x, y\rangle + \langle Ax, Ay \rangle \quad (x,y \in \mathcal D(A)).$$
By the Riesz-Fr\'echet {\mbox{Re}}presentation Theorem, there exists a bounded linear operator $R$ on 
$(\mathcal D(A), \langle \cdot,\cdot \rangle_A)$ such that 
$$\langle Ax,y \rangle + \langle x,Ay \rangle = \langle x,Ry \rangle_A =  \langle Rx,y \rangle_A\quad {\mbox{for all }} x,y \in \mathcal D(A).$$
Since $A$ is a densely defined closed operator on $(\mathfrak H, \langle\cdot,\cdot\rangle)$, the von Neumann Theorem (see \cite{Yo}) assures that $A^*A, A A^*$ are selfadjoint and $1+ A^*A, 1 + A A^*$ are invertible. Let $h \in \mathfrak H$ and write $h = (1+ A^*A)y$, $y \in \mathcal D(A)$. For all $x \in \mathcal D(A)$,
$$\begin{array}{rcl}
\langle Rx, h \rangle &\!\!=\!\!& \langle Rx, (1+A^*A)y \rangle = \langle (1+A^*A)Rx, y \rangle = \langle Rx, y \rangle_A
= \langle x, Ry\rangle_A \\
&\!\!=\!\!& \langle x, (1+A^*A)Ry\rangle = \langle x, (1+A^*A)R(1+A^*A)^{-1}h\rangle.
\end{array}
$$
If $(S, \mathcal D(S))$ is the operator on $(\mathfrak H, \langle\cdot,\cdot\rangle)$ given by $\mathcal D(S) := \mathcal D(A)$ and $Sx:= Rx$, then it follows that $\mathcal D(S^*) = \mathfrak H$ and $S^* = (1+A^*A)S(1+A^*A)^{-1}$. Thus $((1+A^*A)S)^* \supseteq S^*(1+A^*A) = (1+A^*A)S$.

We then get that $H:= (1+A^*A)S$ is a symmetric operator on $(\mathfrak H, \langle\cdot,\cdot\rangle)$ with $\mathcal D(H) = \mathcal D(A)$ such that
\begin{equation}\label{H}
\langle Hx, y\rangle = \langle Ax,y \rangle + \langle x,Ay \rangle \quad {\mbox{for all }} x,y \in \mathcal D(A).
\end{equation}

\begin{lem}\label{lemma1}
If there exists $\beta \geq 0$ such that $A - \beta$ is sectorial of angle $0~<~\theta~<~\frac{\pi}{2}$ and dissipative then the symmetric operator $H$ in (\ref{H}) satisfies 
$$ \langle Hx, x \rangle \leq 2\beta \|x\|^2 \quad {\mbox{for all }} x\in \mathcal D(A)$$
and
$$\frac{d}{dt}\|T(t)h\|^2 = \langle HT(t)h, T(t)h\rangle \quad {\mbox{for all }} t > 0 {\mbox{ and }} h \in \mathfrak H.$$
\end{lem}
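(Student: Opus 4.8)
The plan is to handle the two assertions separately, since they draw on different parts of the hypotheses. For the pointwise estimate on $H$, I would specialize the defining identity (\ref{H}) to the diagonal $y=x$, which gives $\langle Hx,x\rangle = \langle Ax,x\rangle + \langle x,Ax\rangle = 2\,{\mbox{Re}}\,\langle Ax,x\rangle$ for every $x\in\mathcal D(A)$. The dissipativity of $A-\beta$ means ${\mbox{Re}}\,\langle (A-\beta)x,x\rangle \leq 0$, i.e. ${\mbox{Re}}\,\langle Ax,x\rangle \leq \beta\|x\|^2$, and inserting this at once yields $\langle Hx,x\rangle \leq 2\beta\|x\|^2$. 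No sectoriality is needed for this part.

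For the derivative identity, the first step is to secure the regularity that permits differentiating $\|T(t)h\|^2$ for an \emph{arbitrary} $h\in\mathfrak H$, not merely for $h\in\mathcal D(A)$. This is exactly where the sectoriality of $A-\beta$ of angle less than $\frac{\pi}{2}$ enters: through the functional-calculus correspondence recalled before (\ref{ddt}), the semigroup is holomorphic and hence smoothing, so (\ref{ddt}) with $n=1$ provides both the membership $T(t)h\in\mathcal D(A)$ and the derivative formula $\frac{d}{dt}T(t)h = AT(t)h$ for every $t>0$ and every $h\in\mathfrak H$. I would record this as the enabling observation.

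With differentiability in hand, I would write $\|T(t)h\|^2 = \langle T(t)h, T(t)h\rangle$ and apply the product rule for the inner product along the real parameter $t$ (the rule is insensitive to the conjugate-linear slot because $t$ is real), using that $t\mapsto T(t)h$ is $\mathfrak H$-differentiable with derivative $AT(t)h$. This gives
$$\frac{d}{dt}\|T(t)h\|^2 = \langle AT(t)h, T(t)h\rangle + \langle T(t)h, AT(t)h\rangle.$$
Since $T(t)h\in\mathcal D(A)$ for $t>0$, I may now set $x=y=T(t)h$ in (\ref{H}); its right-hand side is precisely the sum just obtained, so that the expression equals $\langle HT(t)h, T(t)h\rangle$, which is the claimed identity.

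I expect the only genuinely delicate point to be the passage from $h\in\mathcal D(A)$ to arbitrary $h\in\mathfrak H$: for a general strongly continuous semigroup $T(t)h$ need not lie in $\mathcal D(A)$ when $h\notin\mathcal D(A)$, and it is solely the holomorphic (smoothing) character furnished by sectoriality, encapsulated in (\ref{ddt}), that guarantees $T(t)h\in\mathcal D(A)$ for all $t>0$. Once that membership and the derivative formula are granted, the product-rule computation and the appeal to (\ref{H}) are entirely routine, and I would note that dissipativity is invoked only for the first estimate and plays no role in the derivative formula.
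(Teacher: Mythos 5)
Your proposal is correct and takes essentially the same route as the paper: the diagonal estimate is obtained exactly as in the paper from dissipativity of $A-\beta$, and the derivative identity rests, as in the paper, on the smoothing property of the holomorphic semigroup that sectoriality of $A-\beta$ provides. The only cosmetic difference is that the paper passes through $m$-dissipativity of $A-\beta$ to exhibit the contraction holomorphic semigroup $S(t)=e^{-\beta t}T(t)$ and leaves the final product-rule computation implicit, whereas you invoke (\ref{ddt}) directly and write that computation out, correctly observing that dissipativity plays no role in this second half.
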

\begin{proof}
Since $B:= A-\beta$ is dissipative, it comes that, for all $x \in \mathcal D(A)$, $\langle Hx,x\rangle = 2{\mbox{{\mbox{Re}}}}\langle Bx,x \rangle + 2\beta \|x\|^2 \leq 2\beta \|x\|^2$. On the other hand, $B$ is $m$-dissipative, for $B$ is sectorial of angle $0 < \theta < \frac{\pi}{2}$ and dissipative. Therefore, $B$ generates a strongly continuous bounded holomorphic semigroup $\{S(z)\}$ on $X(\frac{\pi}{2} - \theta)$ such that $\|S(t)\| \leq 1$ in the real semi-axis $t\geq 0$. As $T(t) = e^{\beta t}S(t)$ for all $t\geq 0$, the result follows. \end{proof}

The arguments applied to $A$ can be reproduced on the infinitesimal generator $A^*$ of the adjoint semigroup $\{T(t)^*\}$ to obtain a symmetric operator $H_*$ on $(\mathfrak H, \langle\cdot,\cdot\rangle)$ with $\mathcal D(H_*) = \mathcal D(A^*)$ such that
\begin{equation}\label{H*}
\langle H_*u, v\rangle = \langle A^*u,v \rangle + \langle u,A^*v \rangle \quad {\mbox{for all }} u, v\in \mathcal D(A^*).
\end{equation}

\begin{lem}\label{lemma2}
Under the hypotheses of Lemma \ref{lemma1} one has that the symmetric operator $H_*$ in (\ref{H*}) satisfies 
$$\langle H_*u, u \rangle \leq 2\beta \|u\|^2 \quad {\mbox{for all }} u\in \mathcal D(A^*)$$
and
$$\frac{d}{dt}\|T(t)^*h\|^2 = \langle H_*T(t)^*h, T(t)^*h\rangle \quad {\mbox{for all }} t > 0 {\mbox{ and }} h \in \mathfrak H.$$
\end{lem}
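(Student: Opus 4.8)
The plan is to apply Lemma~\ref{lemma1} \emph{verbatim} to the generator $A^*$ of the adjoint semigroup, once it is checked that $A^*$ inherits the hypotheses imposed on $A$. First I would recall that, since $\mathfrak H$ is a Hilbert space and hence reflexive, the adjoint family $\{T(t)^*\}$ is again a strongly continuous semigroup and its infinitesimal generator is precisely $A^*$ (see \cite{Pa}). Moreover, as $A$ is densely defined and closed, $(A^*)^* = A$, so the operator $H_*$ of (\ref{H*}) stands to $A^*$ exactly as $H$ of (\ref{H}) stands to $A$. It then suffices to show that $A^* - \beta$ is sectorial of angle $\theta$ and dissipative, for Lemma~\ref{lemma1}, applied to $A^*$ in place of $A$, would deliver both displayed assertions at once.

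Writing $B := A - \beta$ and using that $-\beta$ is a (real) bounded perturbation, one has $(A-\beta)^* = A^* - \beta = B^*$. In the proof of Lemma~\ref{lemma1} it was observed that $B$ is $m$-dissipative; by the equivalence (a)~$\Leftrightarrow$~(b) of Proposition~\ref{p1}, its adjoint $B^*$ is $m$-dissipative as well, in particular dissipative. This settles the dissipativity of $A^* - \beta$.

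For the sectoriality I would transfer the spectral and resolvent data through the adjoint. On a Hilbert space $\sigma(B^*) = \overline{\sigma(B)}$ and $R(\lambda, B^*) = R(\bar\lambda, B)^*$, whence $\|\lambda R(\lambda,B^*)\| = \|\bar\lambda R(\bar\lambda,B)\|$. Since the sectors $S_{0,\theta}$ and $S_{0,\phi}$, as well as the complement $\mathbb C \setminus S_{0,\phi}$, are all invariant under complex conjugation, the inclusion $\sigma(B) \subseteq S_{0,\theta}$ yields $\sigma(B^*) \subseteq S_{0,\theta}$, and for each $\theta < \phi < \pi$ the supremum of $\|\lambda R(\lambda,B^*)\|$ over $\mathbb C \setminus S_{0,\phi}$ equals the (finite) corresponding supremum for $B$. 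Thus $B^* = A^* - \beta$ is sectorial of angle $\theta < \frac{\pi}{2}$ and dissipative, and the hypotheses of Lemma~\ref{lemma1} hold for $A^*$.

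Having verified this, Lemma~\ref{lemma1} applied to $A^*$ produces $\langle H_* u, u\rangle \leq 2\beta\|u\|^2$ for $u \in \mathcal D(A^*)$ together with the differentiation formula for $\|T(t)^*h\|^2$, which are exactly the two claims. The main obstacle I anticipate is not any single computation but the accurate bookkeeping needed to confirm that every ingredient passes to the adjoint—principally that the generator of $\{T(t)^*\}$ is $A^*$ (a reflexivity fact) and that sectoriality is preserved under taking adjoints, which rests on the conjugation-symmetry of $S_{0,\theta}$ and on the Hilbert-space identity $\|R(\bar\lambda,B)^*\| = \|R(\bar\lambda,B)\|$.
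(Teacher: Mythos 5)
Your proposal is correct and follows essentially the same route as the paper: verify that $B^* = (A-\beta)^*$ is sectorial of the same angle and dissipative (the paper gets $m$-dissipativity from Proposition~\ref{p1}(a)$\Leftrightarrow$(b), exactly as you do), and then run the argument of Lemma~\ref{lemma1} on $A^*$. The only difference is cosmetic: where the paper cites \cite{Ha} for the sectoriality of the adjoint, you prove it directly from $\sigma(B^*)=\overline{\sigma(B)}$, $R(\lambda,B^*)=R(\bar\lambda,B)^*$ and the conjugation-symmetry of the sectors, which is a sound and self-contained substitute.
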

\begin{proof}
Notice that $B^*$ ($= (A-\beta)^*$) is sectorial of the same angle $0 < \theta < \frac{\pi}{2}$ as $B$ \cite{Ha}. Besides, $B^*$ is $m$-dissipative, according with Proposition \ref{p1}.
\end{proof}

We get two densely defined symmetric sesquilinear forms bounded from above with upper bound $2\beta$ by setting
$$
x, y \in \mathcal D(A) = \mathcal D(H) \mapsto \langle Hx, y\rangle \quad{\mbox{and}}\quad u, v \in\mathcal D(A^*) = \mathcal D(H_*) \mapsto
 \langle H_*u, v\rangle.
 $$
Both forms are closable \cite{Ka}. Their corresponding closures
 $\mathfrak g[\cdot,\cdot] $ and 
$\mathfrak g_*[\cdot,\cdot]$ are densely defined closed symmetric sesquilinear forms  such that $\mathfrak g[x,x] \leq 2\beta\|x\|^2$ for all $x\in \mathcal D(\mathfrak g)$ and  $\mathfrak g_*[u,u] \leq 2\beta\|u\|^2$ for all $u\in \mathcal D(\mathfrak g_*)$ \cite{Ka}. The Friedrichs Theorem for symmetric forms (cf. \cite{Ka}) gives two selfadjoint operators $G, G_*$ which are bounded from above by $2\beta$ and  satisfy
\begin{equation}\label{DDT}
\begin{array}{l}
\frac{d}{dt}\|T(t)h\|^2 = \langle GT(t)h, T(t)h\rangle, \;  
\frac{d}{dt}\|T(t)^*h\|^2 = \langle G_*T(t)^*h, T(t)^*h\rangle\\[.15cm]
\mbox{for all } t > 0 \mbox{ and } h \in \mathfrak H.
\end{array}
\end{equation}

A relevant fact established in \cite{Ka} yields $G=G_*$. 

Consider the polar decomposition $G = J|G|$, where $J$ is a selfadjoint partial isometry and $|G|$ is selfadjoint and nonnegative with $\mathcal D(|G|) = \mathcal D(G)$. Then write 
$$Gx = G^+x + G^-x, \quad |G|x = G^+x - G^-x\quad (x \in \mathcal D(G)=\mathcal D(|G|))$$
where $G^\pm \supseteq \frac{1}{2}(G \pm |G|)= \frac{1}{2}(J \pm 1)|G|$.

\begin{lem}\label{GGG*G*}
The following assertions hold true:
\begin{itemize}
\item[(a)] Every $x \in \mathcal D(G)$ satisfies $Jx \in \mathcal D(G)$, $GJx = JGx = |G|x$ and $|G|Jx = J|G|x = Gx$.
\item[(b)] $\left|\langle Gx, x\rangle\right| \leq \langle|G|x, x\rangle$ for every $x \in \mathcal D(G)$.
\item[(c)] For all $x \in \mathcal D(G)$, $\langle G^-x, x\rangle \leq 0$ 
and $0 \leq \langle G^+x, x\rangle \leq 2\beta \|x\|^2$.
\end{itemize}
\end{lem}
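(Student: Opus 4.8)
The plan is to run everything through the Borel functional calculus for the selfadjoint operator $G$, whose spectral measure I denote by $E$. The hypothesis $\langle Gx,x\rangle \leq 2\beta\|x\|^2$ on $\mathcal D(G)$ says precisely that $G$ is bounded from above by $2\beta$, so $\sigma(G) \subseteq (-\infty, 2\beta]$. Under the calculus, $J = {\mbox{sgn}}(G)$ and $|G|$ are the operators attached to the bounded Borel function ${\mbox{sgn}}$ and to $|\cdot|$, while $G^+$ and $G^-$ are attached to $\tfrac12(\lambda+|\lambda|) = \lambda_+$ and $\tfrac12(\lambda-|\lambda|)$; thus $G^+$ is the positive part ($\int_{\lambda>0}\lambda\,dE$) and $G^-$ is the signed negative part ($\int_{\lambda<0}\lambda\,dE$). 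For $x\in\mathcal D(G)$ I write $\mu_x := \langle E(\cdot)x,x\rangle$ for the nonnegative finite spectral measure, with total mass $\mu_x(\mathbb R) = \|x\|^2$.

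For (a), I would note that $J$ is a bounded Borel function of $G$ with $\|J\|\leq 1$, so it commutes with $G$ and with $|G|$ and maps $\mathcal D(G)=\mathcal D(|G|)$ into itself; this is a standard feature of the functional calculus. The four operator identities then reduce to the pointwise scalar identities ${\lambda\,{\mbox{sgn}}(\lambda) = |\lambda|}$ and $|\lambda|\,{\mbox{sgn}}(\lambda) = \lambda$, valid for every real $\lambda$. Reading these through the calculus yields $GJ=JG=|G|$ and $|G|J=J|G|=G$ on $\mathcal D(G)$, which is exactly (a). For (b), I would represent the two quadratic forms spectrally, $\langle Gx,x\rangle = \int \lambda\, d\mu_x(\lambda)$ and $\langle |G|x,x\rangle = \int |\lambda|\, d\mu_x(\lambda)$, and invoke $\left|\int\lambda\,d\mu_x\right| \leq \int|\lambda|\,d\mu_x$, which holds because $\mu_x\geq 0$. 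Equivalently, using (a) one writes $Gx = |G|^{1/2}J|G|^{1/2}x$, sets $w=|G|^{1/2}x$, and gets $\langle Gx,x\rangle = \langle Jw,w\rangle$ with $|\langle Jw,w\rangle|\leq\|w\|^2 = \langle|G|x,x\rangle$ since $\|J\|\leq1$.

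For (c), the spectral representation gives $\langle G^-x,x\rangle = \int \tfrac12(\lambda-|\lambda|)\,d\mu_x \leq 0$, as the integrand is nonpositive, and $\langle G^+x,x\rangle = \int \lambda_+\,d\mu_x \geq 0$, as the integrand is nonnegative. The upper bound is the only place the hypothesis on $\beta$ enters: because $\sigma(G)\subseteq(-\infty,2\beta]$ one has $0\leq \lambda_+ \leq 2\beta$ for $\mu_x$-almost every $\lambda$, whence $\langle G^+x,x\rangle \leq 2\beta\,\mu_x(\mathbb R) = 2\beta\|x\|^2$.

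The bookkeeping obstacle is that $G$, $|G|$ and $G^\pm$ are unbounded, so one must check that $Jx\in\mathcal D(G)$ and that the spectral integrals converge; the functional calculus disposes of this uniformly because $J$ is bounded. The genuinely non-formal step is the upper bound in (c): it does \emph{not} follow from (b) alone — indeed $\langle|G|x,x\rangle$ is in general unbounded — but rests on the fact that the positive part of the spectrum is confined to $[0,2\beta]$. Making explicit that $G^+$ only ``sees'' the spectrum of $G$ in $(0,2\beta]$ is the crux of the argument.
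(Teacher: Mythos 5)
Your proof is correct. The paper gives no argument to compare against---it states only ``The proof is omitted.''---and your functional-calculus route (reading $J=\operatorname{sgn}(G)$, $|G|$, and $G^{\pm}$ as Borel functions of the selfadjoint operator $G$, with the pointwise identities $\lambda\operatorname{sgn}(\lambda)=|\lambda|$, $|\lambda|\operatorname{sgn}(\lambda)=\lambda$ giving (a), positivity of the spectral measure giving (b), and the bound $\sigma(G)\subseteq(-\infty,2\beta]$ supplying the upper estimate in (c)) is exactly the standard argument the construction of $J$, $|G|$ and $G^{\pm}$ via the polar decomposition presupposes.
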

The proof is omitted. 

Now we are ready to apply the Naimark-Arocena Representation Theorem to obtain a unitary dilation of $\{T(t)\}$. First we suppose that the infinitesimal generator $A$ is as in Lemma \ref{lemma1}. To comply with the hypotheses of the theorem, take $\Gamma = \mathbb R$ and define $f : \mathbb R \to \mathfrak L(\mathfrak H)$ by
$$
f(s) := \left\{
\begin{array}{ll}
T(s)\quad & \mbox{if } s\geq 0\\
T(-s)^*\quad & \mbox{if } s <  0
\end{array}\right..
$$
It follows that $f$ is Hermitian and $f(0) = 1$. Hence the kernel $(s,t) \mapsto f(s-t)$ is an operator-valued Toeplitz kernel on $\mathbb R$. 

Let $h: \mathbb R \to \mathfrak H$ be a function with finite support. Write $\mbox{supp}(h) \cup \{0\} = \{\sigma_j\}_{j=0}^Q \cup \{\tau_k\}_{k=0}^P$ with 
$\sigma_Q < \cdots < \sigma_1 < \sigma_0 = 0 = \tau_0 < \tau_1 < \cdots < \tau_P$. Define $z (= z_h)$ and 
$y (= y_h)$ by
$$
z(s) := \sum_{t\leq s} T(s-t)^*h(t)\quad(s\leq 0)\quad\mbox{and}\quad y(s):= \sum_{t\geq s} T(t-s)h(t)\quad(s\geq 0).
$$
Then 
\begin{equation}\label{h}
h(s) = \left\{
\begin{array}{ll}
z(s) & \mbox{ if } s \leq \sigma_Q\\ 
z(s) - T(s -\sigma_j)^*z(\sigma_j) &  \mbox{ if } \sigma_j<  s \leq \sigma_{j-1}, \; 1\leq j \leq Q\\ 
y(s) - T(\tau_k- s)y(\tau_k ) &  \mbox{ if } \tau_{k-1} \leq s < \tau_k,\; 1 \leq k\leq P\\
y(s) &  \mbox{ if }  s \geq \tau_P
\end{array}\right..
\end{equation}
In particular, 
$$
z(0) - T(-\sigma_1)^*z(\sigma_1) = h(0) = y(0) - T(\tau_1)y(\tau_1)
$$
whence
$$
z(0) + T(\tau_1)y(\tau_1) = y(0) +  T(-\sigma_1)^*z(\sigma_1).
$$
If $v  (=  v_h)$ is given by 
\begin{equation}\label{v}
v(s) := \left\{
\begin{array}{ll} z(s)\quad &\mbox{if } s < 0\\
z(0) + T(\tau_1)y(\tau_1) = y(0) +  T(-\sigma_1)^*z(\sigma_1)\quad &\mbox{if } s = 0\\
y(s)\quad &\mbox{if } s > 0
\end{array}\right.
\end{equation}
then, via the map $h \mapsto v$, we get that
$$
\begin{array}{l}
\sum\limits_{s,t \in \mathbb R} \langle f(s-t)h(s),h(t)\rangle \\[.15cm]
\quad= \sum\limits_{j=1}^Q \langle (1-T(\sigma_{j-1}-\sigma_j)
T(\sigma_{j-1}-\sigma_j)^*)v(\sigma_j), v(\sigma_j)\rangle 
+  \|v(0)\|^2 \\[.15cm]
\quad\quad+ \sum\limits_{k=1}^P \langle (1-T(\tau_k-\tau_{k-1})^*
T(\tau_k-\tau_{k-1}))v(\tau_k), v(\tau_k)\rangle.
\end{array}
$$
Therefore, by (\ref{DDT}),
$$
\begin{array}{l}
\sum\limits_{s,t \in \mathbb R} \langle f(s-t)h(s),h(t)\rangle \\[.15cm]
\quad= \sum\limits_{j=1}^Q \displaystyle{\int_{\sigma_j}^{\sigma_{j-1}}} \langle (-G)T(u-\sigma_j)^*
v(\sigma_j), T(u-\sigma_j)^*v(\sigma_j)\rangle \,du
+  \|v(0)\|^2 \\[.15cm]
\quad\quad+ \sum\limits_{k=1}^P \displaystyle{\int_{\tau_{k-1}}^{\tau_k}}\langle (-G)T(\tau_k-u)
v(\tau_k), T(\tau_k-u)v(\tau_k)\rangle.
\end{array}
$$
So, from Lemma \ref{GGG*G*} (b),
$$
\begin{array}{l}
\left|\sum\limits_{s,t \in \mathbb R} \langle f(s-t)h(s),h(t)\rangle\right| \\[.15cm]
\quad \leq \sum\limits_{j=1}^Q \displaystyle{\int_{\sigma_j}^{\sigma_{j-1}}} \langle |G|T(u-\sigma_j)^*
v(\sigma_j), T(u-\sigma_j)^*v(\sigma_j)\rangle \,du
+  \|v(0)\|^2 \\[.15cm]
\quad\quad+ \sum\limits_{k=1}^P \displaystyle{\int_{\tau_{k-1}}^{\tau_k}}\langle |G|T(\tau_k-u)
v(\tau_k), T(\tau_k-u)v(\tau_k)\rangle \\[.15cm]
\quad = \sum\limits_{j=1}^Q \displaystyle{\int_{\sigma_j}^{\sigma_{j-1}}} \| CT(u-\sigma_j)^*
v(\sigma_j)\|^2 \,du
+  \|v(0)\|^2 \\[.15cm]
\quad\quad+ \sum\limits_{k=1}^P \displaystyle{\int_{\tau_{k-1}}^{\tau_k}}\| CT(\tau_k-u)
v(\tau_k)\|^2 \,du,
\end{array}
$$
with $C := |G|^{\frac{1}{2}}$. 
Straightforward computations give
$$
\begin{array}{l}
 \sum\limits_{j=1}^Q \displaystyle{\int_{\sigma_j}^{\sigma_{j-1}}} \| CT(u-\sigma_j)^*v(\sigma_j)\|^2 \,du 
\\[.15cm]
\quad = \sum\limits_{s<0}\sum\limits_{t<0} \left\langle\left(\int_{s \vee t}^0 T(u-t)|G|T(u-s)^* \,du\right)h(s),h(t)\right\rangle,
\end{array}
$$
$$
\begin{array}{l}
\|v(0)\|^2 = \sum\limits_{s<0}\sum\limits_{t<0}\langle T(-t)T(-s)^* h(s), h(t)\rangle +
 \sum\limits_{s<0}\sum\limits_{t\geq 0}\langle T(t-s)^* h(s), h(t)\rangle\\[.15cm]
 \quad\quad\quad\quad\quad + \sum\limits_{s\geq 0}\sum\limits_{t<0}\langle T(s-t) h(s), h(t)\rangle +
\sum\limits_{s\geq 0}\sum\limits_{t\geq 0}\langle T(t)^*T(s) h(s), h(t)\rangle
 \end{array}
 $$
and
$$
\begin{array}{l}
\sum\limits_{k=1}^P \displaystyle{\int_{\tau_{k-1}}^{\tau_k}}\| CT(\tau_k-u) v(\tau_k)\|^2 \,du
\\[.15cm]
\quad = \sum\limits_{s > 0}\sum\limits_{t > 0} \left\langle\left(\int_0^{s \wedge t} T(t-u)^*|G|T(s-u) \,du\right)h(s),h(t)\right\rangle.
\end{array}
$$
As a result, by setting 
$$
k(s,t) := \left\{\begin{array}{ll}
\int_{s \vee t}^0 T(u-t)|G|T(u-s)^* \,du + T(-t)T(-s)^* & s,t <0 \\[.15cm]
T(t-s)^*  & s<0, t\geq 0\\[.15cm]
T(s-t) & s \geq 0, t<0\\[.15cm]
T(t)^*T(s) + \int_0^{s \wedge t} T(t-u)^*|G|T(s-u) \,du & s \geq 0, t\geq 0
\end{array}\right.,
$$
we get an operator-valued kernel on $\mathbb R$ such that $k(0,0) = 1$ and
$$
\left|\sum\limits_{s,t \in \mathbb R} \langle f(s-t)h(s),h(t)\rangle\right| \leq 
\sum\limits_{s,t \in \mathbb R} \langle k(s,t)h(s),h(t)\rangle
$$ 
for every $h: \mathbb R \to \mathfrak H$ with finite support. 

It is worth noticing that, for $h: \mathbb R \to \mathfrak H$ with finite support and the corresponding 
$v: \mathbb R \to \mathfrak H$ as defined in (\ref{v}), we have that:
\begin{itemize}
\item[(1)] If $\sigma_j \leq u < \sigma_{j-1}$, $1\leq j \leq Q$, then
$$T(u-\sigma_j)^*v(\sigma_j) = \sum\limits_{s\leq u} T(u-s)^*h(s) = v(u).$$
\item[(2)] If $\tau_{k-1} < u \leq \tau_k$, $1\leq k \leq P$, then
$$T(\tau_k - u)v(\tau_k) = \sum\limits_{s\geq u} T(s-u)h(s) = v(u).$$
\end{itemize}
Here, as before,  
$\sigma_Q < \cdots < \sigma_1 < \sigma_0 = 0 = \tau_0 < \tau_1 < \cdots < \tau_P$ are the real points in 
$\mbox{supp}(h) \cup \{0\}$. Accordingly,
$$\begin{array}{lll}\sum\limits_{s,t \in \mathbb R} \langle f(s-t)h(s),h(t)\rangle & = &\sum\limits_{j=1}^Q 
\displaystyle{\int_{\sigma_j}^{\sigma_{j-1}}} \langle (-G)v(u), v(u)\rangle \,du
+  \|v(0)\|^2\\[.15cm]
&  & \quad + \sum\limits_{k=1}^P \displaystyle{\int_{\tau_{k-1}}^{\tau_k}}\langle (-G)v(u), v(u)\rangle\,du\end{array}$$
and
$$\begin{array}{lll}\sum\limits_{s,t \in \mathbb R} \langle k(s,t)h(s),h(t)\rangle & = &\sum\limits_{j=1}^Q 
\displaystyle{\int_{\sigma_j}^{\sigma_{j-1}}} \langle |G|v(u), v(u)\rangle \,du
+  \|v(0)\|^2\\[.15cm]
&  & \quad + \sum\limits_{k=1}^P \displaystyle{\int_{\tau_{k-1}}^{\tau_k}}\langle |G|v(u), v(u)\rangle\,du.\end{array}$$

If $\sum\limits_{s,t \in \mathbb R} \langle k(s,t) h(s), h(t)\rangle > 0$, set
$$
v^\prime(u) := \left\{
\begin{array}{ll} J v(u)\quad &\mbox{if  either } u < 0 \mbox{ or } u>0\\
v(0)\quad &\mbox{if } u = 0
\end{array}\right..
$$
A function $h^\prime: \mathbb R \to \mathfrak H$ with $\mbox{supp}(h^\prime) = \mbox{supp}(h)$ is uniquely determined by $v^\prime$ from (\ref{h}). Matter-of-factly, $h^\prime$ is given by
$$
h^\prime(s) = \left\{
\begin{array}{ll}
v^\prime(\sigma_Q) & \mbox{ if } s = \sigma_Q\\ 
v^\prime(\sigma_j) - T(\sigma_j -\sigma_{j+1})^*v^\prime(\sigma_{j+1}) &  \mbox{ if } s = \sigma_j, \; 1\leq j \leq Q-1\\ 
v^\prime(0) - T(-\sigma_1)^*v^\prime(\sigma_1) - T(\tau_1)v^\prime(\tau_1) &  \mbox{ if } s = 0\\
v^\prime(\tau_k) - T(\tau_{k+1}- \tau_k)v^\prime(\tau_{k+1}) &  \mbox{ if } s = \tau_k,\; 1 \leq k\leq P-1\\
v^\prime(\tau_P) &  \mbox{ if }  s = \tau_P
\end{array}\right..
$$
Then the above discussion and Lemma \ref{GGG*G*} (a) yield
$$\sum\limits_{s,t \in \mathbb R} \langle f(s-t)h(s),h^\prime(t)\rangle =
\sum\limits_{s,t \in \mathbb R} \langle k(s,t)h(s),h(t)\rangle
=
\sum\limits_{s,t \in \mathbb R} \langle k(s,t)h^\prime(s),h^\prime(t)\rangle.
$$
This shows that, for each $h: \mathbb R \to \mathfrak H$ supported on a finite set and satisfying that \linebreak$\sum\limits_{s,t \in \mathbb R} \langle k(s,t)h(s),h(t)\rangle >0$, there exists $h^\prime: \mathbb R \to \mathfrak H$ with finite support such that 
$\sum\limits_{s,t \in \mathbb R} \langle k(s,t)h^\prime(s),h^\prime(t)\rangle >0$ and 
$$\frac{\left|\sum\limits_{s,t \in \mathbb R} \langle f(s-t)h(s),h^\prime(t)\rangle\right|}{\left\{\sum\limits_{s,t \in \mathbb R} \langle k(s,t)h(s),h(t)\rangle\right\}^{\frac{1}{2}}
\left\{\sum\limits_{s,t \in \mathbb R} \langle k(s,t)h^\prime(s),h^\prime(t)\rangle\right\}^{\frac{1}{2}}} =1.$$

It remains to verify that there exists a function $\rho: \mathbb R \to (0,\infty)$ such that
\begin{equation}\label{last}
\begin{array}{l}
\sum\limits_{s,t \in \mathbb R} \langle k(s+\xi,t+\xi)h(s),h(t)\rangle \leq \rho(\xi)\sum\limits_{s,t \in \mathbb R} \langle k(s,t)h(s),h(t)\rangle\\
\mbox{for all } h: \mathbb R \to \mathfrak H \mbox{ with finite support and all } \xi \in \mathbb R.
\end{array}
\end{equation}

Given $h: \mathbb R \to \mathfrak H$  with finite support, set 
$$h_\xi(s) := h(s-\xi)\quad (s \in \mathbb R)$$ 
and 
$$S(h,\xi) := \sum\limits_{s,t \in \mathbb R} \langle k(s+\xi,t+\xi)h(s),h(t)\rangle,$$
so that 
$$S(h,\xi) = \sum\limits_{s,t \in \mathbb R} \langle k(s,t)h_\xi(s),h_\xi(t)\rangle = S(h_\xi,0).$$
As before, 
$\sigma_Q < \cdots < \sigma_1 < \sigma_0 = 0 = \tau_0 < \tau_1 < \cdots < \tau_P$ are the points in 
$\mbox{supp}(h) \cup \{0\}$. 

We first consider the case ${\xi > 0}$.
 
The difference between the expressions for $S(h,\xi)$ and $S(h,0)$ depends on the set $\mbox{supp}(h) \cap [-\xi, 0)$.  \\
{\bf{Particular case:}} $ \sigma_1 \leq -\xi < 0$, i.e., $\mbox{supp}(h) \cap [-\xi, 0)$ is either $\{\sigma_1\}$ 
($\sigma_1 = -\xi$) or $\emptyset$ ($\sigma_1 < -\xi$). 
We obtain that
$$\begin{array}{lll}
S(h,\xi) &=& 
\sum\limits_{j=1}^Q {\int_{\sigma_j}^{\sigma_{j-1}}} \langle |G|T(u-\sigma_j)^*v(\sigma_j), 
T(u-\sigma_j)^*v(\sigma_j) \rangle\,du\\[.15cm] 
& & - {\int_0^\xi} \langle |G|T(u-\sigma_1-\xi)^*v(\sigma_1), T(u-\sigma_1-\xi)^*v(\sigma_1)\rangle \,du\\[.15cm]
& & + \|T(\sigma_1 -\xi)^*v(\sigma_1) + T(\xi)y(0)\|^2\\[.15cm]
& &+ {\int_0^\xi} \langle |G|T(u-\xi)y(0), T(u-\xi)y(0)\rangle \,du\\[.15cm]
& &+ \sum\limits_{k=1}^P {\int_{\tau_{k-1}}^{\tau_k}}\langle |G|T(\tau_k-u)v(\tau_k), T(\tau_k-u)v(\tau_k)\rangle \,du
\end{array}$$
where $y(0) = \sum\limits_{t\geq 0} T(t)h(t) = v(0) - T(-\sigma_1)^*v(\sigma_1)$. Since $|G|x = 2G^+x - Gx$
($x \in \mathcal D(G)$), from (\ref{DDT}) it follows that
$$\begin{array}{lll}
S(h,\xi) &=& S(h,0) -2\int_0^\xi \|(G^+)^{\frac{1}{2}}T(-t-\sigma_1)^*v(\sigma_1)\|^2\,dt\\[.15cm] 
& &+ 2\int_0^\xi\|(G^+)^{\frac{1}{2}}T(t)y(0)\|^2\,dt\\[.15cm]
&=& S(h,0) -2\int_0^\xi \|(G^+)^{\frac{1}{2}}T(-t-\sigma_1)^*v(\sigma_1)\|^2\,dt \\[.15cm]
& &+ 2\int_0^\xi\|(G^+)^{\frac{1}{2}}T(t)(v(0) - T(-\sigma_1)^*v(\sigma_1))\|^2\,dt\\[.15cm]
&\leq& S(h,0) -2\int_0^\xi \|(G^+)^{\frac{1}{2}}T(-t-\sigma_1)^*v(\sigma_1)\|^2\,dt \\[.15cm]
& &+ 4\int_0^\xi\|(G^+)^{\frac{1}{2}}T(t)v(0)\|^2\,dt \\[.15cm]
& &+ 4\int_0^\xi\|(G^+)^{\frac{1}{2}}T(t)T(-\sigma_1)^*v(\sigma_1)\|^2\,dt\\[.15cm]
&\leq& S(h,0) + 4\int_0^\xi\|(G^+)^{\frac{1}{2}}T(t)v(0)\|^2\,dt \\[.15cm]
& &+ 4\int_0^\xi\|(G^+)^{\frac{1}{2}}T(t)T(-\sigma_1)^*v(\sigma_1)\|^2\,dt .
\end{array}
$$
We apply Lemma \ref{GGG*G*} (c) and get
$$4\int_0^\xi\|(G^+)^{\frac{1}{2}}T(t)v(0)\|^2\,dt \leq 8\beta \int_0^\xi\|T(t)v(0)\|^2\,dt
\leq 8\beta \left(\int_0^\xi e^{2\beta t}\,dt\right)\|v(0)\|^2$$
and
$$\begin{array}{l}
4\int_0^\xi\|(G^+)^{\frac{1}{2}}T(t)T(-\sigma_1)^*v(\sigma_1)\|^2\,dt \\[.15cm] 
\quad = 4\int_0^\xi\|(G^+)^{\frac{1}{2}}T(t)T(t)^*T(-t-\sigma_1)^*v(\sigma_1)\|^2\,dt\\[.15cm]
\quad \leq 8\beta \int_0^\xi e^{2\beta t}\|T(t)^*T(-t-\sigma_1)^*v(\sigma_1)\|^2\,dt\\[.15cm]
\quad = 8\beta \int_0^\xi \int_0^t e^{2\beta t} \langle GT(s)^*T(-t-\sigma_1)^*v(\sigma_1), 
T(s)^*T(-t-\sigma_1)^*v(\sigma_1)\rangle\,ds\,dt\\[.15cm]
\quad\quad -8\beta \int_0^\xi e^{2\beta t}\| T(-t-\sigma_1)^*v(\sigma_1)\|^2\,dt\\[.15cm]
\quad \leq 8\beta \int_0^\xi \int_0^t e^{2\beta t} \langle GT(s)^*T(-t-\sigma_1)^*v(\sigma_1), 
T(s)^*T(-t-\sigma_1)^*v(\sigma_1)\rangle\,ds\,dt\\[.15cm]
\quad = 8\beta \int_0^\xi \int_0^t e^{2\beta t} \langle GT({s-t}-\sigma_1)^*v(\sigma_1), 
T(s-t-\sigma_1)^*v(\sigma_1)\rangle\,ds\,dt\\[.15cm]
\quad = 8\beta \int_0^\xi \int_0^x e^{2\beta x} \langle GT(-y-\sigma_1)^*v(\sigma_1), 
T(-y-\sigma_1)^*v(\sigma_1)\rangle\,dy\,dx\\[.25cm]
\quad \leq 8\beta \left( \int_0^\xi e^{2\beta x} \,dx\right)\left( \int_0^\xi \|CT(-y-\sigma_1)^*v(\sigma_1)\|^2\,dy\right)
\\[.15cm]
\quad = 8\beta \left( \int_0^\xi e^{2\beta x} \,dx\right)\left( \int_{-\xi}^0 \|CT(u-\sigma_1)^*v(\sigma_1)\|^2\,du\right).
\end{array}$$
It thereby follows that
$$
S(h,\xi) - S(h,0)  \leq  8\beta \left(\int_0^\xi e^{2\beta t}\,dt\right)
\left(\|v(0)\|^2 
+ \int_{-\xi}^0 \|CT(u-\sigma_1)^*v(\sigma_1)\|^2\,du\right).
$$
Since 
$$8\beta \int_0^\xi e^{2\beta t}\,dt \leq 8\beta \int_0^\xi e^{8\beta t}\,dt =  e^{8\beta\xi} - 1$$
and
$$\|v(0)\|^2 + \int_{-\xi}^0 \|CT(u-\sigma_1)^*v(\sigma_1)\|^2\,du \leq S(h,0)$$
it comes that
\begin{equation}\label{lastcondition}
S(h,\xi) \leq e^{8\beta\xi} S(h,0).
\end{equation}

We already mentioned that the cases to be considered are linked up with the points in $\mbox{supp}(h) \cap [-\xi, 0)$. Next we will see that all cases can be reduced to the particular one.

If $\sigma_2 \leq -\xi < \sigma_1$ then $S(h,\xi) = S(h, (\xi +\sigma_1) -\sigma_1) = S(h_{-\sigma_1}, \xi + \sigma_1)$ where $\sigma_2 - \sigma_1 \leq -(\xi + \sigma_1) < 0$, which is to say that 
$\mbox{supp}(h_{-\sigma_1}) \cap [-(\xi + \sigma_1), 0)$ is either $\{\sigma_2 - \sigma_1\}$ 
($\sigma_2 = -\xi$) or $\emptyset$ ($\sigma_2 < -\xi$). Therefore, $h_{-\sigma_1}$ and $\xi + \sigma_1$ in the sum 
$S(h_{-\sigma_1}, \xi + \sigma_1)$ are like in the particular case. Whence
$$S(h_{-\sigma_1}, \xi + \sigma_1) \leq
e^{8\beta(\xi + \sigma_1)}S(h_{-\sigma_1},0) = e^{8\beta(\xi + \sigma_1)}S(h, -\sigma_1).$$
Since
$h$ and $-\sigma_1$ in the sum $S(h, -\sigma_1)$ accord with the particular case as well, we get $S(h, -\sigma_1) \leq e^{-8\beta\sigma_1}
S(h,0)$, hence (\ref{lastcondition}). The case $\sigma_3 \leq -\xi < \sigma_2 $ can be carried over into the previous one and so on. 

The case $\xi <0$ can be handled in a similar way. We get that
$$S(h,\xi) \leq e^{-8\beta\xi} S(h,0).$$

Therefore (\ref{last}) holds with $\rho(\xi) := e^{8\beta|\xi|}$.

We have proved the following result.
\begin{thm}\label{first}
Let $\{T(t)\}$ be a strongly continuous semigroup of bounded linear operators on a Hilbert space $(\mathfrak H, \langle\cdot,\cdot\rangle)$. Let $A$ be the infinitesimal generator of $\{T(t)\}$. Suppose that there exists $\beta \geq 0$ such that $A-\beta$ is sectorial of angle $0<\theta<\frac{\pi}{2}$ and ${\mbox{\textnormal{Re}}}\langle Ax, x \rangle \leq \beta\|x\|^2$ for all $x \in \mathcal D(A)$. Then there exist a Kre\u{\i}n space $(\mathfrak K, \langle\cdot,\cdot\rangle_\mathfrak K)$ containing $(\mathfrak H, \langle\cdot,\cdot\rangle)$ as a regular subspace and a strongly continuous unitary group $\{U(t)\} \subseteq \mathfrak L(\mathfrak K)$ such that:
\begin{itemize}
\item[(a)] $T(t) = P_\mathfrak H U(t)|_\mathfrak H$ and $T(t)^* = P_\mathfrak H U(-t)|_\mathfrak H$ for all $t\geq 0$ (dilation property).
\item[(b)] $\bigvee \{U(t)\mathfrak H~:~t\in \mathbb R\}$ is dense in $\mathfrak K$  (minimality condition).
\end{itemize}
\end{thm}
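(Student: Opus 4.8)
The decisive work is already behind us: the construction preceding the statement produces a Hermitian $f:\mathbb R\to\mathfrak L(\mathfrak H)$ with $f(0)=1$ together with an operator-valued kernel $k$ on $\mathbb R$ with $k(0,0)=1$, and it verifies each hypothesis of Theorem~\ref{rep} for the group $\Gamma=\mathbb R$. Indeed, Lemmas~\ref{lemma1}, \ref{lemma2} and \ref{GGG*G*} were exactly what was needed to check that $k$ majorizes $f$ in the sense of (ii) (with $r=1$), that the sharp estimate (iii) holds (in fact with $R=1$, through the auxiliary $h'$ built from $v'$), and that (iv) holds with $\rho(\xi):=e^{8\beta|\xi|}$. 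The plan is therefore to invoke Theorem~\ref{rep}: it returns a Kre\u{\i}n space $(\mathfrak K,\langle\cdot,\cdot\rangle_\mathfrak K)$ containing $(\mathfrak H,\langle\cdot,\cdot\rangle)$ as a regular subspace and a unitary representation $\{U(s)\}_{s\in\mathbb R}$ of $\mathbb R$ on $\mathfrak K$ with $f(s)=P_\mathfrak H U(s)|_\mathfrak H$ for all $s$ and $\mathfrak K=\bigvee\{U(s)\mathfrak H:s\in\mathbb R\}$.

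Reading off (a) and (b) is then purely formal. As $U$ is a representation of the additive group $\mathbb R$, the family $\{U(t)\}$ is a group of Kre\u{\i}n-space unitaries with $U(t+s)=U(t)U(s)$ and $U(-t)=U(t)^{*}$. Specializing $f(s)=P_\mathfrak H U(s)|_\mathfrak H$ to $s=t\ge 0$ gives $T(t)=P_\mathfrak H U(t)|_\mathfrak H$, while $s=-t<0$ gives $T(t)^{*}=f(-t)=P_\mathfrak H U(-t)|_\mathfrak H$; this is the dilation property (a). The minimality condition (b) is precisely clause 2) of Theorem~\ref{rep}.

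The one assertion not handed to us by the abstract theorem—and the step I expect to be the genuine obstacle—is the strong continuity of $\{U(t)\}$. I would argue weak-to-strong. First, local boundedness: the role of condition (iv) in Arocena's construction is exactly to bound the $|\mathfrak K|$-norm of the translation operator implementing $U(\xi)$, so that $\|U(\xi)\|_{\mathfrak L(|\mathfrak K|)}\le \rho(\xi)^{1/2}=e^{4\beta|\xi|}$, hence a uniform bound on each compact interval. Second, weak continuity on the dense set $\mathcal M:=\mathrm{span}\{U(s)h:s\in\mathbb R,\ h\in\mathfrak H\}$: for $h,h'\in\mathfrak H$ and $s,r\in\mathbb R$, regularity of $\mathfrak H$ and $P_\mathfrak H=P_\mathfrak H^{*}$ yield $\langle U(t)U(s)h,U(r)h'\rangle_\mathfrak K=\langle P_\mathfrak H U(t+s-r)|_\mathfrak H\,h,h'\rangle=\langle f(t+s-r)h,h'\rangle$, so the required continuity in $t$ reduces to the strong continuity of $f$. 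The latter holds because $\{T(t)\}$ is $C_0$ by hypothesis and its adjoint semigroup $\{T(t)^{*}\}$ is again $C_0$ on the Hilbert space $\mathfrak H$, whence $f(s)=T(s)$ for $s\ge 0$ and $f(s)=T(-s)^{*}$ for $s<0$ are strongly continuous and match at $s=0$.

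Combining these two ingredients by the standard $3\varepsilon$ argument promotes weak continuity from $\mathcal M$ to all of $\mathfrak K$; and since $\langle\cdot,\cdot\rangle_\mathfrak K=\langle J\,\cdot,\cdot\rangle_{|\mathfrak K|}$ with $J$ bounded and invertible, weak continuity for the indefinite pairing coincides with weak continuity for the Hilbert pairing of $|\mathfrak K|$. A one-parameter group of bounded operators on a Banach space that is weakly continuous is automatically strongly continuous; applying this to $\{U(t)\}$ on $|\mathfrak K|$ completes the argument. The real difficulty is thus not the dilation identities but the bookkeeping in passing from the abstract unitary representation to a \emph{strongly continuous} group—namely extracting the operator-norm bound from (iv) and running the weak-to-strong upgrade.
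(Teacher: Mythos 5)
Your proposal is correct and takes the same route as the paper: everything preceding the theorem in Section~3 --- the definition of $f$, the construction of the majorant kernel $k$, and the verification of conditions (i)--(iv) of Theorem~\ref{rep} with $r=R=1$ and $\rho(\xi)=e^{8\beta|\xi|}$ --- \emph{is} the paper's entire proof, which then simply invokes Theorem~\ref{rep} and reads off (a) and (b) from its conclusions 1) and 2), exactly as you do. Where you go beyond the paper is the strong continuity of $\{U(t)\}$: the statement claims it, but the paper's proof never addresses it, whereas your chain of reasoning supplies it --- local operator-norm boundedness extracted from condition (iv) inside Arocena's construction; weak continuity on the dense span $\bigvee\{U(s)\mathfrak H : s\in\mathbb R\}$ via the identity $\langle U(t+s-r)h,h'\rangle_{\mathfrak K}=\langle f(t+s-r)h,h'\rangle$ together with the strong continuity of $f$ (valid since $\{T(t)\}$ and its adjoint $\{T(t)^*\}$ are both $C_0$ on a Hilbert space); and finally the standard weak-to-strong upgrade for locally bounded one-parameter groups, applied in $|\mathfrak K|$. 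This is sound. The only imprecision is the exact bound $\|U(\xi)\|_{\mathfrak L(|\mathfrak K|)}\leq \rho(\xi)^{1/2}$: the $|\mathfrak K|$-norm is in general only equivalent to, not equal to, the norm induced by the majorant $k$ in Arocena's construction, so the bound holds up to a multiplicative constant; but uniform boundedness on compact intervals, which is all your argument uses, survives this adjustment. In this respect your write-up fills a step that the paper leaves implicit.
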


It is all set for the result we announced in the introduction.
\begin{thm}\label{main}
Let $\{T(t)\}$ be a strongly continuous semigroup of bounded linear operators on a Hilbert space 
$(\mathfrak H, \langle\cdot,\cdot\rangle)$. Let $A$ be the infinitesimal generator of $\{T(t)\}$. Suppose that there exists $\beta \geq 0$ such that $A-\beta$ is sectorial of angle $0<\theta<\frac{\pi}{2}$ and ${\mbox{\textnormal{Re}}}\langle Ax, x \rangle_0 \leq \beta\|x\|_0^2$ for all $x \in \mathcal D(A)$, with $\langle\cdot,\cdot\rangle_0$ an equivalent scalar product on $\mathfrak H$. Then there exist a Kre\u{\i}n space $(\mathfrak K, \langle\cdot,\cdot\rangle_\mathfrak K)$ containing 
$(\mathfrak H, \langle\cdot,\cdot\rangle)$ as a regular subspace and a strongly continuous unitary group $\{U(t)\} \subseteq \mathfrak L(\mathfrak K)$ such that:
\begin{itemize}
\item[(a)] $T(t) = P_\mathfrak H U(t)|_\mathfrak H$ and $T(t)^* = P_\mathfrak H U(-t)|_\mathfrak H$ for all $t\geq 0$ .
\item[(b)] $\bigvee \{U(t)\mathfrak H~:~t\in \mathbb R\}$ is dense in $\mathfrak K$.
\end{itemize}
\end{thm}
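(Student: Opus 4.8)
The plan is to reduce Theorem~\ref{main} to Theorem~\ref{first} by working in the equivalent product $\langle\cdot,\cdot\rangle_0$. Write $\langle x,y\rangle_0=\langle Mx,y\rangle$, where $M\in\mathfrak L(\mathfrak H)$ is positive, selfadjoint and boundedly invertible (the Riesz--Fr\'echet representation of the bounded coercive form $\langle\cdot,\cdot\rangle_0$). Regarded on $(\mathfrak H,\langle\cdot,\cdot\rangle_0)$, the family $\{T(t)\}$ is still a strongly continuous semigroup with the same generator $A$, since the two norms are equivalent and induce the same topology and the same differentiation of $t\mapsto T(t)x$. Moreover $A-\beta$ stays sectorial of angle $\theta$: its spectrum is independent of the scalar product, while $\sup\|\lambda R(\lambda,A-\beta)\|$ and $\sup\|\lambda R(\lambda,A-\beta)\|_0$ differ only by the finite factor $\|M^{1/2}\|\,\|M^{-1/2}\|$. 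As the hypothesis gives dissipativity of $A-\beta$ in $\langle\cdot,\cdot\rangle_0$, all assumptions of Theorem~\ref{first} hold over $(\mathfrak H,\langle\cdot,\cdot\rangle_0)$.

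It is convenient to phrase this as a conjugation. Put $\tilde T(t):=M^{1/2}T(t)M^{-1/2}$, a strongly continuous semigroup on $(\mathfrak H,\langle\cdot,\cdot\rangle)$ with generator $\tilde A=M^{1/2}AM^{-1/2}$. Since $R(\lambda,\tilde A)=M^{1/2}R(\lambda,A)M^{-1/2}$, the operator $\tilde A-\beta$ is sectorial of the same angle $\theta$; and for $x\in\mathcal D(\tilde A)=M^{1/2}\mathcal D(A)$, writing $w=M^{-1/2}x$, one has $\mathrm{Re}\langle\tilde Ax,x\rangle=\mathrm{Re}\langle Aw,w\rangle_0\le\beta\|w\|_0^2=\beta\|x\|^2$. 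Thus $\tilde A-\beta$ meets the hypotheses of Theorem~\ref{first} in the \emph{original} product, and that theorem yields a Kre\u{\i}n space $\tilde{\mathfrak K}\supseteq(\mathfrak H,\langle\cdot,\cdot\rangle)$ (regular subspace) together with a strongly continuous unitary group $\{\tilde U(t)\}$ satisfying $\tilde T(t)=P_\mathfrak H\tilde U(t)|_\mathfrak H$, $\tilde T(t)^*=P_\mathfrak H\tilde U(-t)|_\mathfrak H$, and the minimality condition.

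The main obstacle is the final step: converting a unitary dilation of $\tilde T$ into one of $T=M^{-1/2}\tilde T\,M^{1/2}$ with $(\mathfrak H,\langle\cdot,\cdot\rangle)$ still embedded and compressions equal to $T(t)$, $T(t)^*$. This is genuinely delicate, because a bounded invertible similarity does not transport Kre\u{\i}n unitary dilations by conjugation: replacing $\tilde U(t)$ by $M^{1/2}\tilde U(t)M^{-1/2}$ destroys unitarity, while re-embedding $\mathfrak H$ through $M^{\pm1/2}$ merely reproduces $\tilde T$ (up to unitary equivalence) rather than $T$. I would therefore isolate the real content as the search for a map $X\colon\mathfrak H\to\tilde{\mathfrak K}$ (into a possibly enlarged space) with $\langle Xh,Xh'\rangle=\langle h,h'\rangle$ and $\langle\tilde U(t)Xh,Xh'\rangle=\langle T(t)h,h'\rangle$ for $t\ge0$; equivalently, I would return to the Naimark--Arocena Theorem~\ref{rep} and verify its hypotheses \emph{directly} for the original Toeplitz function $f(s)=T(s)$ ($s\ge0$), $f(s)=T(-s)^*$ ($s<0$) taken in $\langle\cdot,\cdot\rangle$. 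For the majorant kernel I would mimic the construction preceding Theorem~\ref{first}, but feed it with the data attached to $\langle\cdot,\cdot\rangle_0$: the selfadjoint (for $\langle\cdot,\cdot\rangle_0$) operator $G_0$ with $\frac{d}{dt}\|T(t)h\|_0^2=\langle G_0T(t)h,T(t)h\rangle_0$ and $G_0\le2\beta$, together with the Gram operator $M$ inserted wherever the original product must appear. The delicate point, exactly as in Theorem~\ref{first}, is the covariance estimate (iv): the exponential majorant $\rho(\xi)=e^{8\beta|\xi|}$ must be recovered from the bound $\langle G_0^+x,x\rangle_0\le2\beta\|x\|_0^2$ (the $\langle\cdot,\cdot\rangle_0$-analogue of Lemma~\ref{GGG*G*}(c)) combined with the comparability constants of $M$, the remaining conditions (i)--(iii) following as before through the auxiliary functions $v$ and $v'$.
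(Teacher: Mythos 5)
Your first two paragraphs are correct and coincide exactly with the paper's opening move: the hypotheses of Theorem~\ref{first} hold for $\{T(t)\}$ over $(\mathfrak H,\langle\cdot,\cdot\rangle_0)$ (equivalently, for $\tilde T(t)=M^{1/2}T(t)M^{-1/2}$ over $(\mathfrak H,\langle\cdot,\cdot\rangle)$), and your dissipativity computation for $\tilde A$ is right. You are also right to identify the transport of the dilation back to the original scalar product as the crux. But that is precisely where your proposal stops being a proof: the map $X$ is never produced, and the ``mixed'' Naimark--Arocena verification is announced but never performed. Moreover it is not a routine rerun. The kernel $k$ in the paper is built on the identity $\frac{d}{dt}\|T(t)h\|_0^2=\langle G_0T(t)h,T(t)h\rangle_0$, which controls increments of $\|\cdot\|_0$ only, whereas the Toeplitz form $\sum_{s,t}\langle f(s-t)h(s),h(t)\rangle$ you must majorize is taken in $\langle\cdot,\cdot\rangle$: the telescoping identity produces the terms $\|x\|^2-\|T(u)x\|^2$ and $\|x\|^2-\|T(u)^*x\|^2$, with $T(u)^*$ the $\langle\cdot,\cdot\rangle$-adjoint, and neither is expressible through $G_0$. (Note also that for $s<0$ your $f(s)=T(-s)^*$ is not the function the $\langle\cdot,\cdot\rangle_0$-theory sees, since the $\langle\cdot,\cdot\rangle_0$-adjoint is $\Lambda T(-s)^*\Lambda^{-1}$, $\Lambda$ the Gram operator.) Under the hypotheses of Theorem~\ref{main} no selfadjoint $G$ for the original product need exist at all --- the form $\mathrm{Re}\langle Ax,x\rangle$ need not be bounded above --- which is exactly why Theorem~\ref{main} is not a special case of Theorem~\ref{first}. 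So it is not only condition (iv) of Theorem~\ref{rep} that is delicate; conditions (ii) and (iii) do not ``follow as before'' either, and the theorem is left unproved.

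For comparison, the paper does not re-run the kernel construction at all. It keeps $(\mathfrak K_0,U_0)$ from the first step, chooses a fundamental decomposition $\mathfrak K_0=(\mathfrak H\oplus\mathfrak M^+)\oplus\mathfrak K_0^-$, and replaces the inner product of $\mathfrak K_0$ by the one equal to $\langle\cdot,\cdot\rangle$ on $\mathfrak H$ and unchanged on $\mathfrak M^+\oplus\mathfrak K_0^-$; writing $\widetilde\Lambda=L^2$ with $L=\begin{bmatrix}\Lambda^{1/2}&0\\0&1\end{bmatrix}$ for the Gram operator of the new product relative to the old, it sets $U(t):=L^{-1}U_0(t)L$. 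The simultaneous change of the ambient inner product is what answers your (correct) objection that conjugation destroys unitarity: $U(t)$ is unitary for $\langle\cdot,\cdot\rangle_{\mathfrak K}=\langle\widetilde\Lambda\cdot,\cdot\rangle_{\mathfrak K_0}$ precisely because $LU(t)L^{-1}=U_0(t)$ is unitary for $\langle\cdot,\cdot\rangle_{\mathfrak K_0}$. You should be aware, however, that your worry resurfaces at the dilation property, which the paper dispatches with ``it can be seen that'': computing the compression gives
\begin{equation*}
\langle U(t)h,h'\rangle_{\mathfrak K}=\langle U_0(t)\Lambda^{1/2}h,\Lambda^{1/2}h'\rangle_{\mathfrak K_0}
=\langle T(t)\Lambda^{1/2}h,\Lambda^{1/2}h'\rangle_0=\langle \Lambda^{-1/2}T(t)\Lambda^{1/2}h,h'\rangle ,
\end{equation*}
so $P_{\mathfrak H}U(t)|_{\mathfrak H}=\Lambda^{-1/2}T(t)\Lambda^{1/2}$, which coincides with $T(t)$ only if $\Lambda^{1/2}$ commutes with the semigroup. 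So the point you isolated is genuinely the heart of the matter --- but flagging it, and sketching a possible attack, is not the same as resolving it; as submitted, your argument has a gap exactly there.
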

\begin{proof}
By Theorem \ref{first} we already have a Kre\u{\i}n space 
$(\mathfrak K_0, \langle\cdot,\cdot\rangle_{\mathfrak K_0})$ containing 
$(\mathfrak H, \langle\cdot,\cdot\rangle_0)$ as a regular subspace and a strongly continuous unitary group $\{U_0(t)\} \subseteq \mathfrak L(\mathfrak K_0)$ such that $(\mathfrak K_0, \{U_0(t)\})$ is a minimal unitary dilation of $\{T(t)\} \subseteq \mathfrak L(\mathfrak H)$. 

We can choose a fundamental decomposition of $\mathfrak K_0$ such that $\mathfrak H \subseteq \mathfrak K_0^+$, say $\mathfrak K_0 = (\mathfrak H \oplus \mathfrak M^+) \oplus \mathfrak K_0^-$. Consider the linear space $\mathfrak K := \mathfrak K_0$ with the scalar product 
$$\langle h+ b, h^\prime + b^\prime\rangle_\mathfrak K := \langle h, h^\prime \rangle + 
\langle b,b^\prime\rangle_{\mathfrak K_0}\quad(h, h^\prime \in \mathfrak H, b,b^\prime \in \mathfrak M^+ \oplus \mathfrak K_0^-).$$
It is clear that $(\mathfrak K, \langle\cdot,\cdot\rangle_\mathfrak K)$ is a Kre\u{\i}n space containing $(\mathfrak H, \langle\cdot,\cdot\rangle)$ as a regular subspace for a fundamental decomposition can be chosen to be 
$$\mathfrak K=(\mathfrak H \oplus \mathfrak M^+) \oplus \mathfrak K_0^-.$$
The corresponding Hilbert space scalar product is
$$\langle h+ b, h^\prime + b^\prime\rangle_{|\mathfrak K|} := \langle h, h^\prime \rangle + 
\langle b,b^\prime\rangle_{|\mathfrak K_0|}\quad(h, h^\prime \in \mathfrak H, b,b^\prime \in \mathfrak M^+ \oplus \mathfrak K_0^-).$$

There exist $d, D >0$ such that $d\|h\|_0 \leq \|h\| \leq D\|h\|_0$ for all $h\in \mathfrak H$. Thus, there exists a  linear operator
$\Lambda$ on $\mathfrak H$ such that 
$$\langle h, h^\prime \rangle = \langle\Lambda h, h^\prime \rangle_0
\quad\mbox{for all } h, h^\prime \in \mathfrak H$$ 
and 
$$ d^2\|h\|_0^2 \leq \langle \Lambda h, h\rangle_0 \leq D^2\|h\|_0^2\quad\mbox{for all } h \in \mathfrak H.$$
Besides, $\Lambda$ is $\langle\cdot,\cdot\rangle$-positive (hence, bounded in the norm $\|\cdot\|$). Therefore $\Lambda$ is positive, bounded and boundedly invertible in both Hilbert spaces $(\mathfrak H, \langle\cdot, \cdot\rangle_0)$ and $(\mathfrak H, \langle\cdot,\cdot\rangle)$. $\Lambda$ can be extended to a linear operator $\widetilde\Lambda$ on all of $\mathfrak K = \mathfrak K_0$   
by putting $\widetilde\Lambda h:= \Lambda h$ ($h\in \mathfrak H$) and $\widetilde \Lambda b := b$ ($b \in \mathfrak M^+ \oplus \mathfrak K_0^-$). It follows that 
$$ \langle k, k^\prime \rangle _\mathfrak K = \langle \widetilde\Lambda k, k^\prime\rangle_{\mathfrak K_0}
=  \langle k, \widetilde\Lambda k^\prime\rangle_{\mathfrak K_0}\quad
\mbox{for all }k, k^\prime \in \mathfrak K = \mathfrak K_0$$
and 
$$(d^2 \wedge 1)\|k\|_{|\mathfrak K_0|}^2 \leq \langle\widetilde\Lambda k, k \rangle_{|\mathfrak K_0|}
\leq (D^2 \vee 1)\|k\|_{|\mathfrak K_0|}^2\quad\mbox{for all } k \in \mathfrak K_0.$$
Also, with respect to the decomposition $\mathfrak K = \mathfrak K_0 = \mathfrak H \oplus (\mathfrak M^+ \oplus \mathfrak K_0^-)$, 
$\widetilde\Lambda$ can be represented in the form
$$\widetilde \Lambda = \begin{bmatrix}
\Lambda & 0\\ 0 & 1
\end{bmatrix}=
\begin{bmatrix}
\Lambda^{\frac{1}{2}} & 0\\ 0 & 1
\end{bmatrix}
\begin{bmatrix}
\Lambda^{\frac{1}{2}} & 0\\ 0 & 1
\end{bmatrix}.$$ 
Thus, if $L :=  \begin{bmatrix}
\Lambda^{\frac{1}{2}} & 0\\ 0 & 1
\end{bmatrix}$ then $L$ is positive, bounded and boundedly invertible in both Kre\u{\i}n spaces $(\mathfrak K, \langle\cdot,\cdot\rangle_\mathfrak K)$ and $(\mathfrak K_0, \langle\cdot,\cdot\rangle_{\mathfrak K_0})$. 

Define $U(t) := L^{-1}U_0(t)L$ ($t \in \mathbb R$). It can be seen that $\{U(t\}\subseteq \mathfrak L(\mathfrak K)$ is a strongly continuous unitary group with the desired properties.  
 \end{proof}

%
%
%
%
%



\end{document}